\newtheorem{Theorem}{Theorem}[section]
\newtheorem{Lemma}[Theorem]{Lemma}
\newtheorem{Corollary}[Theorem]{Corollary}
\newtheorem{Proposition}[Theorem]{Proposition}
\theoremstyle{definition}
\newtheorem{Remark}[Theorem]{Remark}
\newtheorem{Example}[Theorem]{Example}
\def\pd{\operatorname{pd}}
\def\depth{\operatorname{depth}} 
\def\dstab{\operatorname{dstab}}
\def\supp{\operatorname{supp}}
\def\Min{\operatorname{Min}}
\def\Max{\operatorname{Max}}
\def\NN{{\mathbb N}}
\def\ZZ{{\mathbb Z}} 
\def\a{{\mathbf a}}
\def\b{{\mathbf b}}
\def\e{{\mathbf e}}
\def\1{{\mathbf 1}}
\def\mm{{\mathfrak m}}
\def\E{{\mathcal E}}
\def\F{{\mathcal F}}
\def\D{{\Delta}}
\begin{document}

\title{A general formula for the index\\ of depth stability of edge ideals}

\author{Ha Minh Lam}
\address{Institute of Mathematics, Vietnam Academy of Science and Technology, 18 Hoang Quoc Viet, 10307 Hanoi, Vietnam}
\email{hmlam@math.ac.vn}

\author{Ngo Viet Trung}
\address{Institute of Mathematics, Vietnam Academy of Science and Technology, 18 Hoang Quoc Viet, 10307 Hanoi, Vietnam}
\email{nvtrung@math.ac.vn}

\author{Tran Nam Trung}
\address{Institute of Mathematics, Vietnam Academy of Science and Technology, 18 Hoang Quoc Viet, 10307 Hanoi, Vietnam}
\email{tntrung@math.ac.vn}

\subjclass[2010]{13C05, 13C15, 05C70, 05E40}
\keywords{edge ideal, bipartite graph, powers of ideal, depth function, index of depth stability, degree complex, diophantine system of linear inequalities, graph parallelization, matching-covered graph, ear decomposition.}

\begin{abstract} 
By a classical result of Brodmann, the function $\depth R/I^t$ is asymptotically a constant, i.e. there is a number $s$ such that $\depth R/I^t = \depth R/I^s$ for $t > s$. One calls the smallest number $s$ with this property the index of depth stability of $I$ and denotes it by $\dstab(I)$. This invariant remains mysterious till now.
The main result of this paper gives an explicit formula for $\dstab(I)$ when $I$ is an arbitrary ideal generated by squarefree monomials of degree 2. That is the first general case where one can characterize $\dstab(I)$ explicitly. The formula expresses $\dstab(I)$ in terms of the associated graph. The proof involves new techniques which relate different topics such as simplicial complexes, systems of linear inequalities, graph parallelizations, and ear decompositions. It provides an effective method for the study of powers of edge ideals.
\end{abstract}

\maketitle



Let $R$ be a polynomial ring over a field and $I$ a homogeneous ideal in $R$.
The invariant $\depth R/I$ is the maximal length of a regular sequence of homogeneous elements in $R/I$. 
It is closely related to the projective dimension of $R/I$ by the Auslander-Buchsbaum formula
$$\depth R/I + \pd R/I = \dim R.$$

Brodmann's result on the behavior of the function $\depth R/I^t$ \cite{Br} immediately leads to the problem whether one can characterize $\dstab(I)$ {\it explicitly} in terms of the ideal $I$. 
There are several attempts to estimate $\dstab(I)$ for particular classes of ideals \cite{BM, BaRa, CMS, ERT, Ha, HT, HH, HH2, HQ, HRV, HTr, Mo, TNT}. 
However, almost nothing is known about the nature of $\dstab(I)$, even when $I$ is a squarefree monomial ideal. Such ideals can be associated with combinatorial objects such as graphs, hypergraphs or simplicial complexes, which can be used to describe algebraic properties of the ideal. 

Let $G$ be a simple graph (i.e.~without loops and multiple edges) on the vertex set $V = [1,n]$. Let $R = k[x_1,...,x_n]$ be a polynomial ring over a field $k$. The {\it edge ideal} $I = I(G)$ of $G$ is generated by the monomials $x_ix_j$, $\{i,j\} \in G$.  
If $G$ is a connected non-bipartite graph, it has been known for a long time that $\displaystyle \lim_{t \to \infty}\depth R/I^t = 0$ \cite{CMS} before a formula for $\dstab(I)$ was found in \cite{LT}. 

If $G$ is a connected bipartite graph, the problem is more difficult because $\depth R/I^t$ $\ge 1$ for all $t \ge 1$ by \cite{SVV}. 
In this case, to compute $\dstab(I)$ is a cohomological problem.
If $G$ is a tree, Herzog and Hibi \cite{HH2} gave an upper bound for $\dstab(I)$, which later turned out to be the exact value of $\dstab(I)$ \cite{TNT}.  This fact already indicates the difficulty of the above problem.
The only other case where one has an explicit formula for $\dstab(I)$ is the case of a unicyclic bipartite graph \cite{TNT}. 

The main result of this paper gives an explicit formula for $\dstab(I)$ when $G$ is an arbitrary connected bipartite graph. Combining it with the result of \cite{LT} for non-bipartite graphs we get a {\it complete solution} to the above problem for ideals generated by squarefree monomials of degree 2. 

The formula is the result of the interactions between several seemingly different topics in commutative algebra, algebraic topology, integer programming, and combinatorics. 
The starting point is the finding of \cite{TNT} that $\displaystyle \lim_{t \to \infty}\depth R/I^t = 1$ and 
$$\dstab(I) = \min\{t|\ \depth R/I^t = 1\}.$$
By \cite{MT1}, $\depth R/I^t = 1$ if and only if there exists a disconnected simplicial complex associated with $I^t$ whose facets are determined by linear inequalities.
From this it follows that $\dstab(I)$ is given by the minimum order of the solutions of a diophantine system of linear inequalities (Theorem \ref{system}). 
Our insight is that a solution of this diophantine system gives rise to a matching-covered parallelization (Theorem \ref{matching-covered}).
A parallelization of a graph is a graph obtained by deletion and duplication of the vertices \cite[Part VIII]{Sch}. It is matching-covered if every edge is contained in a perfect matching \cite{Lo2,LP}. A matching-covered graph can be characterized in terms of ear decompositions.

An ear decomposition of a connected graph is a partition of the edges into a sequence of paths $L_1,...,L_r$ such that $L_1$ is a cycle and $L_i$, $i \ge 2$, meets $L_1+\cdots+L_{i-1}$ exactly at the endpoints of $L_i$.  
The paths $L_1,...,L_r$ are called ears. 
A connected graph has an ear decomposition if and only if it is 2-edge connected, i.e. it remains connected after removing any edge \cite{Ro}. 
By a celebrated result of Lovasz and Plumer \cite{LP}, a connected bipartite graph is matching-covered if and only if it has an ear decomposition with only one even ear. 

The base graph of a matching-covered parallelization needs not be 2-edge connected but has degenerate ear decompositions, which allow an edge with a specified endpoint as an ear. 
We will view such an ear as a cycle of length 2 (since it is like a closed walk of length 2) and call it a 2-cycle.
A partition of the edges into a sequence of paths or 2-cycles $L_1,...,L_r$ such that $L_1$ is a cycle  and 
$L_i$, $i \ge 2$, meets $L_1+\cdots+L_{i-1}$ exactly at the endpoints of $L_i$ is called a generalized ear decomposition. 
This notion was introduced in \cite{LT} for the study of the associated primes of $I^t$. 
It has the advantage that every connected graph has generalized ear decompositions. 
We shall see that the order of matching-covered parallelizations is related to the number of even ears in
generalized ear decompositions of the base graph (Theorem \ref{min}).
 
If $G$ is a connected graph, we denote  by $\varphi(G)$ the minimal number of even ears in generalized ear decompositions of $G$ and set
$$\mu(G) := (\varphi(G)+n-1)/2.$$
This notion has its origin in coding theory \cite{SZ}.

For every subset $U \subseteq V$, we denote by $G_U$ the induced subgraph of $G$ on $U$.
We call $G_U$ dominating if $U$ is a dominating set of $G$, i.e. every vertex of $V\setminus U$ is adjacent to a vertex of $U$.
If $G$ is not a complete bipartite graph, we define
$$s(G) := \min\{\mu(G_U)|\ G_U \text{ is a dominating connected induced subgraph of $G$}\}.$$
If $G$ is a complete bipartite graph, we set $s(G) = 0$.
We can characterize $\dstab(I)$ in terms of $s(G)$ as follows. 
\medskip

\noindent {\bf Theorem \ref{main}}. 
Let $I$ be the edge ideal of a connected bipartite graph $G$. Then 
$$\dstab(I) = s(G) + 1.$$

There are many interesting applications of the above formula.
First, we can completely describe all connected bipartite graphs with $\dstab(I) = 1, 2, 3, 4$ (Corollary \ref{small}).
Such a result seems to be out of reach before.
We can easily deduce or improve all previous results on $\dstab(I)$ for connected bipartite graphs. 
For instances, we give a sharp bound for $\dstab(I)$ in terms of disjoint cycles of $G$ (Theorem \ref{bound}), from which we immediately obtain the closed formulas for $\dstab(I)$ in the case $G$ is a tree or a unicyclic graph in \cite{TNT}.
We can easily prove the bound $\dstab(I) \le n/2$ for well-covered graphs, which is a main result of \cite{BM}. 
Moreover, we can characterize those graphs for which this bound is attained (Theorem \ref{well-covered}). 
These applications are meant to demonstrate the effectiveness of the above formula.

Combining Theorem \ref{main} with the result in the non-bipartite case \cite{LT} we obtain a general formula for an arbitrary graph. For that we need some additional notions.

Let $G_1,...,G_c$ be the connected components of $G$.
We call $G$ {\it strongly nonbipartite} if $G_i$ is nonbiparite for all $i = 1,...,c$.
A generalized ear decomposition of $G$ is a sequence of generalized ear decompositions of $G_1,...,G_c$.
If $G$ a strongly nonbipartite graph, we denote by $\varphi^*(G)$ the minimum number of even ears in generalized ear decompositions of $G$ which starts with an odd cycle in all $G_i$, $i = 1,...,c$, and set
$$\mu^*(G) := (\varphi^*(G)+n-c)/2.$$

If $G$ a connected nonbipartite graph, we denote by $s(G)$ the minimum number of $\mu^*(G_U)$ among dominating strongly nonbipartite induced subgraphs $G_U$ of $G$. This notion has been used in \cite{LT} to study 
the stability of the set of associated primes of $I^t$, from which it follows that $\dstab(I) = s(G)+1$.  
\medskip

\noindent {\bf Theorem \ref{arbitrary}}.
Let $G$ be an arbitrary graph without isolated vertices. Let $G_1,...,G_c$ be the connected components of $G$. Then
$$\dstab(I) = s(G_1) + \cdots + s(G_c)+1.$$

The approach in the bipartite case differs from the nonbipartite case in both algebraic and combinatorial aspects. That is  the reason why the definition of $s(G)$ is different in these cases. 

The proof of Theorem \ref{main} is almost combinatorial in nature. It consist of several steps passing the original problem
first to a problem of integer programing, then to graph parallelizations and finally to ear decompositions. During the proof we have to solve combinatorial problems which has not been addressed before, namely,
\begin{itemize}
\item the characterization of the solutions of a class of diophantine systems of linear equations by means of matching-covered parallelizations (Theorem \ref{matching-covered}),
\item the estimate of the minimal order of matching-covered parallelizations by means of ear decompositions (Theorem \ref{min}),
\end{itemize}
The solutions of these problems are not merely applications of known results in combinatorics but requires new concepts such that generalized ear decompositions. This paper together with \cite{LT} shows that graph parallelizations and ear decompositions are effective tools in the study of cohomological behaviors of powers of edge ideals. 
Graph parallelizations have been used to study other properties of edge ideals in \cite{DRV, MMV, MRV,Tr}.
\par

The paper is divided into 4 sections. In Section 1 we present the link between $\dstab(I)$ and the feasibility of a diophantine system of linear equations and inequalities. In Section 2 we characterize the solutions of this system by means of matching-covered parallelizations. In Section 3 we describe the minimal order of matching-covered parallelizations in terms of generalized ear decompositions of the base graph. In Section 4 and Section 5 we prove general formulas for $\dstab(I)$ in the case $G$ is a connected bipartite graph or an arbitrary graph, respectively. \par

We assume that the reader is familiar with basic concepts of graph theory. 
For unexplained notions in commutative algebra we refer the reader to the book of Eisenbud \cite{Ei}.


\section{Depth stability and diophantine systems of linear inequalities}

This section is a preliminary to the stability of the function $\depth R/I^t$ when $I$ is the edge ideal of a simple graph $G$. We shall see that if $G$ is a connected bipartite graph, $\dstab(I)$ can be expressed in terms of a diophantine systems of linear inequalities.

By the following result of \cite{TNT},  $\displaystyle \lim_{t \to \infty} \depth R/I^t$ is known and $\dstab(I)$ is the place where $\depth R/I^t$ reaches this limit.

\begin{Theorem} \label{general} \cite[Theorem 4.4]{TNT}
Let $G$ be a simple graph without isolated vertices.  
Let $s$ be the number of the connected bipartite components of $G$. Then \par
{\rm (1)} $\min\{\depth R/I^t|\ t \ge 1\} = s$,\par
{\rm (2)} $\dstab(I) = \min\{t|\ \depth R/I^t = s\}$, \par
{\rm (3)} $\dstab(I) = \sum_{j =1}^c\dstab(I_j) - c+1,$ where $I_1,...,I_c$ are the edge ideals of the connected components of $G$.
\end{Theorem}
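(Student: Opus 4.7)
The plan is to reduce the general statement to the case of a single connected component by means of a depth formula for powers of a sum of ideals in disjoint variable sets, and then to invoke the known asymptotic behavior of $\depth R_j/I_j^{t_j}$ for connected bipartite and connected non-bipartite graphs.

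Write $G = G_1 \sqcup \cdots \sqcup G_c$ with edge ideals $I_j \subseteq R_j := k[X_j]$ in pairwise disjoint sets of variables, so that $R = R_1 \otimes_k \cdots \otimes_k R_c$ and $I = I_1 R + \cdots + I_c R$. The central algebraic ingredient I would use is the K\"unneth-type depth formula
$$\depth R/I^t = \min\left\{ \sum_{j=1}^c \depth R_j/I_j^{t_j} \ :\ t_1 + \cdots + t_c = t + c - 1,\ t_j \ge 1 \right\},$$
which I would prove for $c = 2$ and then iterate. The $c = 2$ case can be handled by applying, for each pair $(i,j)$ with $i+j = t+1$, the Mayer--Vietoris exact sequence
$$0 \to R/\bigl((I_1R)^i \cap (I_2R)^j\bigr) \to R/(I_1R)^i \oplus R/(I_2R)^j \to R/\bigl((I_1R)^i + (I_2R)^j\bigr) \to 0,$$
together with the identity $(I_1R)^i \cap (I_2R)^j = I_1^i I_2^j R$ (valid because the variables are disjoint) and the tensor-product additivity $\depth(R_1/I_1^i \otimes_k R_2/I_2^j) = \depth R_1/I_1^i + \depth R_2/I_2^j$. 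Matching the primary components on the two sides of $I^t$ and tracking the depth through the long exact sequence then yields the formula. Alternatively, a formula of this shape can be cited from the existing literature on depth of powers of mixed sums.

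Once this depth formula is available, the remainder is combinatorial bookkeeping. For (1), the overall minimum decouples as
$$\min_{t \ge 1} \depth R/I^t = \sum_{j=1}^c \min_{t_j \ge 1} \depth R_j/I_j^{t_j},$$
which equals $s$: a connected bipartite component contributes $1$ by the bound $\depth R_j/I_j^{t_j} \ge 1$ of \cite{SVV} combined with the asymptotic value $\lim_{t_j \to \infty} \depth R_j/I_j^{t_j} = 1$; a connected non-bipartite component contributes $0$ by the asymptotic value of \cite{CMS}. For (2) and (3), the asymptotic value is a lower bound on $\depth R_j/I_j^{t_j}$ in both cases, attained precisely for $t_j \ge \dstab(I_j)$. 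Hence $\depth R/I^t = s$ if and only if $t+c-1$ admits a decomposition into positive integers with $t_j \ge \dstab(I_j)$ for each $j$, and such a decomposition exists if and only if $t + c - 1 \ge \sum_j \dstab(I_j)$, i.e.\ $t \ge \sum_j \dstab(I_j) - c + 1$. This simultaneously yields (2) and (3).

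The principal obstacle is establishing the depth formula in the sharpness required by (1): one must rule out any hidden drop in depth coming from the cross terms in the expansion of $(I_1 + \cdots + I_c)^t$ and verify that the primary components interact in the expected way across the Mayer--Vietoris sequences. A subsidiary point is the monotonicity-from-below of $\depth R_j/I_j^{t_j}$, namely that its values never dip below the asymptotic limit; this is immediate from $\depth \ge 1 = \lim$ in the bipartite case and from the trivial $\depth \ge 0 = \lim$ in the non-bipartite case, so no additional argument is needed there.
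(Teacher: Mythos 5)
The paper itself offers no proof of this statement: it is quoted verbatim from \cite[Theorem 4.4]{TNT}, so there is nothing internal to compare your argument with, and your proposal has to stand on its own. It does not, because its central ingredient --- the exact K\"unneth-type formula $\depth R/I^t=\min\{\sum_j\depth R_j/I_j^{t_j}\ :\ \sum_j t_j=t+c-1,\ t_j\ge 1\}$ --- is asserted rather than proved. What the standard filtration/Mayer--Vietoris machinery actually gives (filtering $S/(I_1+I_2)^t$ by the images of the $I_2^k$, with factors $R_1/I_1^{t-k}\otimes_k I_2^k/I_2^{k+1}$) is only a lower bound of the corrected shape $\min_{i+j=t+1}\min\{\depth R_1/I_1^i+\depth R_2/I_2^j,\ \depth R_1/I_1^i+\depth R_2/I_2^{j-1}+1\}$; the reverse inequality $\depth R/I^t\le\depth R_1/I_1^i+\depth R_2/I_2^j$ does not follow from ``tracking the depth through the long exact sequence'' (connecting maps can raise depth), and it is exactly this upper bound that you need to show the depth really drops to $s$ once $t\ge\sum_j\dstab(I_j)-c+1$. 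Your exact sequence for $(I_1R)^i\cap(I_2R)^j=I_1^iI_2^jR$ gives information about the products $I_1^iI_2^j$, not about $I^t$ itself, and the phrase ``matching the primary components'' is precisely where the real work is hidden. Nor is the equality a routine citation: the literature on powers of sums of ideals in disjoint variables (H\`a--Nguyen--Trung--Trung and its successors) proves inequalities of the corrected form above, with equality only under additional hypotheses or asymptotically; for monomial ideals the upper bound is usually obtained by separate colon-ideal arguments (e.g.\ $\depth S/Q\le\depth S/(Q\colon f)$ for a carefully chosen monomial $f$ with $(I_1+I_2)^t\colon f=I_1^i+I_2^j$), none of which appears in your sketch. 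In short, the formula you rely on is essentially equivalent to the hard content of part (3) of the theorem.

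There is a second, smaller gap. Your derivation of (2) and (3) uses that $\depth R_j/I_j^{t_j}$ equals its limit \emph{precisely} when $t_j\ge\dstab(I_j)$. The ``only if'' half is not a formal consequence of the definition of $\dstab$: a priori the depth function could touch its limiting value at some $t_j<\dstab(I_j)$, rise again, and stabilize only later, and then your converse implication ($\depth R/I^t=s\Rightarrow t\ge\sum_j\dstab(I_j)-c+1$) fails. For edge ideals this pathology can be excluded, but it requires an argument: in the connected nonbipartite case $\depth R_j/I_j^{t_j}=0$ if and only if $\mm\in\Ass(I_j^{t_j})$, and one invokes the persistence of associated primes of powers of edge ideals \cite{MMV}; in the connected bipartite case one must show that $\depth R_j/I_j^{t_j}=1$ persists when $t_j$ increases, e.g.\ by bumping a witness $\a$ of the system $(*)$ by $\e_u+\e_v$ along an edge $\{u,v\}$. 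Your final paragraph addresses only the different (and easy) issue that the depth never dips below the limit and explicitly claims no further argument is needed, which is exactly where this gap sits.
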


In particular, the computation of $\dstab(I)$ can be reduced to the case $G$ is a connected graph.
If $G$ is a connected nonbipartite graph, $\displaystyle \lim_{t \to \infty}\depth R/I^t = 0$
and there is an explicit formula for $\dstab(I)$ in terms of $G$ \cite[Corollary 3.8]{LT}.
If $G$ is a connected bipartite graph, $\displaystyle \lim_{t \to \infty}\depth R/I^t = 1$ and no explicit formula for $\dstab(I)$ was known. However, there is a criterion for $\depth R/I^t = 1$ in terms of the so called degree complex, which is the starting point of our investigation. \par

Let $J$ be an arbitrary monomial ideal in $R$.
It is well known that 
$$\depth R/J = \min\{i|\ H_\mm^i(R/J) \neq 0\},$$ where $H_\mm^i(R/J)$ denotes the $i$--th local cohomology module of $R/J$ with respect to the maximal homogeneous ideal $\mm$ of $R$.
Since $R/J$ is an $\NN^n$-graded ring, $H_\mm^i(R/J)$ is $\ZZ^n$-graded. 
For $\a \in \ZZ^n$, the $\a$--components of $H_\mm^i(R/J)$ can be expressed in terms of the reduced cohomology of a simplicial complex $\D_a(J)$ \cite{Ta}. We call it a {\it degree complex} of $J$. This concept was developed in order to generalize Hochster's formula for squarefree monomial ideals \cite{Ho}. 
If $J$ is an intersection of powers of prime ideal, the facets of $\D_a(J)$ can be described in terms of a system of linear inequalities. We refer the reader to \cite{MT1,MT} for a detailed presentation. 



Let $I^{(t)}$ denote the $t$--th symbolic power of $I$, which is the intersection of the $t$-th powers of the minimal primes 
of $I$. By a pioneering work of Simis, Vasconcelos and Villarreal \cite{SVV}, 
we know that $I^t = I^{(t)}$ for all $t \ge1$ if $G$ is a bipartite graph. 
Therefore, the degree complexes of $I^t$ can be described in terms of systems of linear inequalities. \par

Let $\Max(G)$ denote the set of maximal independent sets of $G$.

\begin{Lemma} \label{facet} \cite[Lemma 1.5]{TNT}
Let $G$ be a bipartite graph. For all $\a \in \NN^n$,
$$\D_\a(I^t) = \big\langle F \in  \Max(G)|\  \sum_{i \not\in F}a_i  < t \big\rangle.$$
\end{Lemma}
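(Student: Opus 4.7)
The plan is to reduce to the symbolic power $I^{(t)}$ via Simis--Vasconcelos--Villarreal and then apply the general description of degree complexes of intersections of powers of monomial primes recalled from \cite{MT1,MT}.

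First I would invoke the theorem of Simis, Vasconcelos and Villarreal \cite{SVV}: since $G$ is bipartite, $I^t = I^{(t)}$ for all $t \ge 1$, and so
$$I^t = \bigcap_{F \in \Max(G)} P_F^t,$$
where $P_F := (x_i : i \in V\setminus F)$ is the minimal prime of $I$ corresponding to the maximal independent set $F$ (whose complement $V\setminus F$ is a minimal vertex cover of $G$). This realizes $I^t$ as an intersection of monomial primary ideals, the setting in which the facets of the degree complex are governed by linear inequalities.

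Next I would apply the standard criterion for a face of a degree complex: for a monomial ideal $J$ and $\a \in \NN^n$, a subset $F \subseteq V$ is a face of $\D_\a(J)$ iff $x^\a \notin JR_F \cap R$ with $R_F := R[x_i^{-1} : i \in F]$. When $J = \bigcap_k Q_k$ with each $Q_k$ a $P_k$-primary monomial ideal, localization at $F$ turns $Q_k R_F$ into the unit ideal precisely when $\supp(P_k) \cap F \neq \emptyset$ and leaves the other components unchanged. Applied to $J = I^t$, the component $P_{F'}^t$ survives exactly when $F \subseteq F'$, and $x^\a \notin P_{F'}^t$ translates to the linear inequality $\sum_{i \notin F'} a_i < t$. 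Combining these gives
$$F \in \D_\a(I^t) \iff \exists\, F' \in \Max(G) \text{ with } F \subseteq F' \text{ and } \sum_{i \notin F'} a_i < t.$$
Taking $F = F'$ shows each such $F'$ is itself a face, and such an $F'$ is automatically maximal in $\D_\a(I^t)$ because it is already a maximal independent set of $G$. This yields the claimed facet description.

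The only technical point is the localization identity $(\bigcap_k Q_k) R_F \cap R = \bigcap_{k:\, \supp(P_k) \cap F = \emptyset} Q_k$ for intersections of monomial primary ideals, which is standard and already recorded in \cite{MT1,MT}. I therefore do not expect any genuine obstacle: the content of the lemma is really just the translation of the algebraic containment $x^\a \in P_{F'}^t$ into the inequality $\sum_{i \notin F'} a_i \ge t$, together with the combinatorial fact that minimal primes of $I(G)$ are indexed by $\Max(G)$.
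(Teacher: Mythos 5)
Your proposal is correct and follows essentially the route the paper itself indicates for this lemma (which it cites from \cite[Lemma 1.5]{TNT}): use the Simis--Vasconcelos--Villarreal equality $I^t=I^{(t)}=\bigcap_{F\in\Max(G)}P_F^t$ for bipartite $G$, then apply the localization description of degree complexes of intersections of powers of monomial primes from \cite{MT1,MT}, with $x^\a\in P_{F'}^t$ translating into $\sum_{i\notin F'}a_i\ge t$. The maximality argument (a face is contained in some admissible $F'\in\Max(G)$, which cannot be properly contained in another independent set) is also fine, so there is nothing to add.
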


The angle brackets means that the sets inside are the facets of the simplicial complex.

\begin{Example} For $t = 1$ we have $\D_0(I) = \big\langle \Max(G)\big\rangle$ and $\D_\a(I) = \emptyset$ for $\a = (1,...,1)$. \end{Example}

There is the following criterion for $\depth R/I^t = 1$ in terms of degree complex.

\begin{Lemma} \label{TNT} \cite[Lemma 3.1]{TNT}
Let $G$ be a connected bipartite graph with bipartition $(A,B)$. 
Then $\depth R/I^t = 1$ if and only if there exists $\a \in \NN^n$ such that
$\D_\a(I^t) = \langle A,B \rangle$. Moreover, if $t = \dstab(I)$, then such $\a$ satisfies the condition
$$\sum_{i \not\in A}a_i  = \sum_{i \not\in B}a_i = t-1.$$
\end{Lemma}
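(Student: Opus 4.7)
The plan is to prove the two directions of the equivalence and then the equality condition, by combining Takayama's formula with the combinatorics of bipartitions.

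For the \emph{``if'' direction}, assume $\D_\a(I^t) = \langle A, B\rangle$ for some $\a$. Since $G$ is connected bipartite without isolated vertices, $A$ and $B$ are nonempty and disjoint, so $\langle A, B\rangle$ is a disjoint union of two simplices and hence has nontrivial $\tilde H^0$. Takayama's formula identifies $H_\mm^1(R/I^t)_\a$ with $\tilde H^0(\D_\a(I^t);k)$, so $\depth R/I^t\leq 1$. Together with the lower bound $\depth R/I^t\geq 1$ from \cite{SVV}, this gives $\depth R/I^t=1$.

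For the \emph{``only if'' direction}, suppose $\depth R/I^t=1$. Takayama's formula provides some $\b\in\NN^n$ with $\D_\b(I^t)$ disconnected, whose facets split into two nonempty classes $\F_1\sqcup\F_2$ with disjoint vertex supports $V_1,V_2$. The driving observation is that in a connected bipartite graph the bipartition is unique, so any two disjoint maximal independent sets $F_1,F_2$ with $F_1 \cup F_2 = V$ must be $\{A,B\}$; in particular, once we can guarantee that both $A$ and $B$ appear as facets of $\D_\b(I^t)$ (necessarily in different components, since $A\cap B=\emptyset$), every other facet $F\in\F_1$ is disjoint from $B$ and so contained in the independent set $A$, forcing $F=A$ by maximality---and analogously for $\F_2$---so that $\D_\b(I^t)=\langle A,B\rangle$.

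The \emph{main obstacle} is therefore to show that $\b$ can be chosen so that $A$ and $B$ are simultaneously facets of $\D_\b(I^t)$, i.e. $\sum_{i\in B}b_i<t$ and $\sum_{i\in A}b_i<t$, while preserving disconnectedness. My plan is to start from any disconnected $\D_\b(I^t)$, pick facets $F_1\in\F_1,F_2\in\F_2$ (automatically disjoint), and shift $F_1,F_2$ toward $A,B$ through the lattice structure of minimal vertex covers of the bipartite graph. Concretely, I would use the fact that for $v\notin V_1\cup V_2$ the vertex $v$ must have a neighbor in every facet (otherwise $F\cup\{v\}$ would be independent, violating maximality) and then locally modify $\b$---for example, lowering $b_u$ at carefully chosen vertices so that the facet-defining inequalities of Lemma~\ref{facet} continue to hold---in a way that enlarges $V_1\cup V_2$. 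Iterating this, or equivalently choosing $\b$ that maximizes $|V_1\cup V_2|$ among all disconnected $\D_\b(I^t)$, should force $V_1\cup V_2=V$ and hence (by the uniqueness of the bipartition) put $A$ and $B$ on the two sides. This combinatorial bookkeeping on the diophantine inequalities of Lemma~\ref{facet} is the step I expect to be the most technical.

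Finally, for the \emph{equality condition} at $t=\dstab(I)$, assume $\D_\a(I^t)=\langle A,B\rangle$ and suppose $\sum_{i\notin A}a_i\leq t-2$. If $\sum_{i\notin B}a_i\leq t-2$ as well, then $\a$ itself satisfies the inequalities of Lemma~\ref{facet} defining $\langle A,B\rangle$ at level $t-1$, so $\D_\a(I^{t-1})=\langle A,B\rangle$ and the ``if'' direction yields $\depth R/I^{t-1}=1$, contradicting $t=\dstab(I)$. Otherwise $\sum_{i\notin B}a_i=t-1$, which forces $\sum_{i\in A}a_i=t-1\geq 1$, so some $v\in A$ has $a_v>0$; a direct verification on the inequalities of Lemma~\ref{facet} shows $\D_{\a-\e_v}(I^{t-1})=\langle A,B\rangle$, again contradicting $t=\dstab(I)$. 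Hence $\sum_{i\notin A}a_i=t-1$, and by symmetry $\sum_{i\notin B}a_i=t-1$.
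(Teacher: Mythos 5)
The paper does not prove this statement at all: it is quoted verbatim from \cite[Lemma 3.1]{TNT}, so there is no internal proof to compare against, and your attempt has to stand on its own. Two of your three pieces do stand: the ``if'' direction (Takayama's formula in a degree $\a \in \NN^n$ gives $H_\mm^1(R/I^t)_\a \cong \tilde H^0(\D_\a(I^t);k) \neq 0$ for the disconnected complex $\langle A,B\rangle$, and $\depth R/I^t \ge 1$ by \cite{SVV}) and the ``moreover'' clause (your descent from $\a$ to $\a$ or $\a-\e_v$ at level $t-1$, checked against the inequalities of Lemma~\ref{facet}, correctly contradicts the minimality of $t = \dstab(I)$).

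The genuine gap is the ``only if'' direction, which is exactly where the content of the lemma lies, and you have only sketched a plan for it. You start from some $\b$ with $\D_\b(I^t)$ disconnected and propose to ``shift'' towards a degree in which both $A$ and $B$ are facets, but the key step is left at the level of ``should force'' and ``the step I expect to be the most technical''. Two concrete problems: (i) your proposed local move, lowering entries $b_u$, only relaxes the strict inequalities of Lemma~\ref{facet}, hence only \emph{adds} facets; new facets $F \neq A,B$ necessarily meet both sides of the bipartition (since $F \subseteq A \cup B$ and $F$ is maximal independent), so they tend to reconnect the complex, and you give no mechanism guaranteeing that disconnectedness survives while the vertex support grows --- indeed, simple examples (e.g.\ the path on five vertices) show that many combinatorially conceivable disconnected facet configurations are not realizable as $\D_\b(I^t)$ at all, so the bookkeeping is genuinely constrained and cannot be waved through; (ii) the parenthetical claim that $A$ and $B$, once facets, lie in different components ``since $A \cap B = \emptyset$'' is not a valid inference (disjoint facets can lie in one component through intermediate facets); the correct argument is that any facet $F \neq A,B$ meets both $A$ and $B$, so if $A,B$ are facets of a disconnected $\D_\b(I^t)$ the complex is already $\langle A,B\rangle$ --- this is reparable, but the repair does not remove problem (i). A smaller unaddressed point: $\depth R/I^t = 1$ only gives $H_\mm^1(R/I^t)_\b \neq 0$ for some $\b \in \ZZ^n$; Takayama's formula in degrees with a negative coordinate involves $\tilde H^{-|G_\b|}$ and the condition $\D_\b(I^t) = \{\emptyset\}$ rather than disconnectedness, so your reduction to $\b \in \NN^n$ with disconnected $\D_\b(I^t)$ also needs an argument. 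As it stands, the correct course is either to carry out the missing modification argument in full or to do what the paper does and cite \cite[Lemma 3.1]{TNT}.
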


By Lemma \ref{facet} the condition $\D_\a(I^{t})  = \langle A,B \rangle$ means that $\a$ is a solution of the following system of linear inequalities
\begin{align*}
\sum_{i \not\in A}a_i  & < t,\
\sum_{i \not\in B}a_i  < t,\\
\sum_{i \not\in F}a_i  & \ge t,\ F \in \Max(G), F \neq A,B.
\end{align*}

Since $V = A \sqcup B$, we have
$$\sum_{i \not\in A}a_i  = \sum_{i \in B}a_i,\ \sum_{i \not\in B}a_i  = \sum_{i \in A}a_i.$$
Moreover, we can always increase the components $a_i$ to get a new solution with 
$\sum_{i \in A}a_i  = t-1$ and $\sum_{i \in B}a_i  =  t-1$. 
From this it follows that 
$$|\a| = \sum_{i \in A}a_i +  \sum_{i \in B}a_i = 2(t-1),$$
where $|\a| := a_1 + \cdots + a_n$.

Let $\Min(G)$ denote the set of minimal (vertex) covers of $G$.
Note that $F \in \Max(G)$ if and only if $V - F \in \Min(G)$.
By Lemma \ref{TNT}, $\depth R/I^t = 1$ if and only if the following diophantine system of linear equations and inequalities is feasible:
\begin{align*} 
\sum_{i \in A}a_i  & = t-1,\
\sum_{i \in B}a_i  = t-1,\\
\sum_{i \in C}a_i  & \ge t,\ C \in \Min(G), C \neq A,B.
\end{align*}

The latter system can be translated in a straightforward manner into the following system without involving the parameter $t$:
\begin{equation} \left.
\begin{array}{ll}
\displaystyle \sum_{i \in A}a_i  & =  \sum_{i \in B}a_i,\\
\displaystyle \sum_{i \in C}a_i  & > \sum_{i \in A}a_i, \ C \in \Min(G), C \neq A,B. 
\end{array}
\right. \tag{$*$} 
\end{equation}

Therefore, we obtain the following characterization for $\D_\a(I^t) = \langle A,B \rangle$.

\begin{Lemma} \label{solution}
Let $G$ be a connected bipartite graph with bipartition $(A,B)$. 
Then $\D_\a(I^t) = \langle A,B \rangle$ if $\a$ is a solutions of $(*)$ and $t = |\a|/2+1$.
Conversely, if $\D_\a(I^t) = \langle A,B \rangle$, there exits $\a' \ge \a$ such that $\a'$ is a solutions of $(*)$ and $t = |\a'|/2+1$. 
\end{Lemma}

Summing up we have the following formula for $\dstab(I)$ in terms of the solutions of a diophantine system of linear equations and inequalities.

\begin{Theorem} \label{system}
Let $G$ be a connected bipartite graph with bipartition $(A,B)$. Then 
$$\dstab(I) = \min\{ |\a|/2|\ \a \in \NN^n \text{ is a solutions of }  (*)\}+1.$$
\end{Theorem}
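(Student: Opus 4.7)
My plan is to assemble Theorem \ref{system} as a direct consequence of the three preceding results. First, I would invoke Theorem \ref{general}(2) applied to the connected bipartite case (where the number $s$ of connected bipartite components equals $1$), which tells us that $\dstab(I)$ is precisely the smallest $t \ge 1$ for which $\depth R/I^t = 1$. Thus it suffices to identify this minimal $t$ in terms of the diophantine system $(*)$.

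Next I would chain Lemma \ref{TNT} with Lemma \ref{solution}. Lemma \ref{TNT} recasts the condition $\depth R/I^t = 1$ as the existence of a vector $\a \in \NN^n$ with $\D_\a(I^t) = \langle A, B \rangle$. Lemma \ref{solution} then identifies such vectors $\a$ precisely as the solutions of $(*)$ for which $t = |\a|/2 + 1$. Here I should note (as is already observed in the discussion preceding Lemma \ref{solution}) that the equalities $\sum_{i \in A}a_i = \sum_{i \in B}a_i = t-1$ force $|\a| = 2(t-1)$, so $|\a|/2$ is automatically a nonnegative integer and the bijection between $t$-values and solutions is coherent.

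Combining these equivalences yields
$$\{t \ge 1 \mid \depth R/I^t = 1\} \;=\; \{|\a|/2 + 1 \mid \a \in \NN^n \text{ is a solution of } (*)\},$$
and taking the minimum of both sides, together with Theorem \ref{general}(2), gives the claimed formula. Non-emptiness of the set on the right is guaranteed by Brodmann's asymptotic stability together with Theorem \ref{general}, which ensure that $\depth R/I^t = 1$ for all sufficiently large $t$ and hence that $(*)$ admits at least one solution.

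There is no genuine obstacle at this step: the substantive combinatorial and cohomological content is carried by the earlier lemmas, in particular by the translation between the facets of the degree complex $\D_\a(I^t)$ and linear inequalities, which relies crucially on the equality $I^t = I^{(t)}$ for bipartite edge ideals from \cite{SVV}. Theorem \ref{system} should be understood as the clean packaging of those results into a variational formula which will serve as the entry point for the integer-programming and graph-parallelization arguments developed in the next sections.
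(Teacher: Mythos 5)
Your proposal is correct and follows essentially the same route as the paper: apply Theorem \ref{general}(2) in the connected bipartite case, then translate $\depth R/I^t = 1$ via Lemma \ref{TNT} into the existence of $\a$ with $\D_\a(I^t) = \langle A,B\rangle$, and conclude with Lemma \ref{solution}. The additional remarks on non-emptiness of the solution set and the parity of $|\a|$ are harmless refinements of the same argument.
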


\begin{proof}
By Theorem \ref{general}(2), $\dstab(I) = \min\{t|\ \depth R/I^t = 1\}$.
By Lemma \ref{TNT}, $\depth R/I^t = 1$ if and only if there exist $\a \in \NN^n$ such that $\D_\a(I^t)  = \langle A,B \rangle$. Hence the conclusion follows from Lemma \ref{solution}.
\end{proof}

\begin{Remark}
By Theorem \ref{system}, we have to solve the integer programming problem that minimize the objective function $|\a|$, subject to the constraints $(*)$.
\end{Remark}

We do not solve the diophantine system $(*)$ directly. Instead, we will characterize the solutions $\a \in \NN^n$ of $(*)$ in terms of $G$. First, we observe that the case $\a = 0$ imposes a strict condition on $G$.

\begin{Lemma}  \label{a=0}
Let $G$ be a connected bipartite graph with bipartition $(A,B)$. 
Then $\a = 0$ is a solution of the system $(*)$ if and only if $G$ is a complete bipartite graph.
\end{Lemma}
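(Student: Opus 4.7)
The plan is to reduce the statement to a purely graph-theoretic equivalence: $\mathbf{a} = 0$ is a solution of $(*)$ if and only if $\Min(G) = \{A, B\}$, and then to show that this latter condition is equivalent to $G$ being complete bipartite.

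First I would observe that setting $\mathbf{a} = 0$ automatically satisfies the equality $\sum_{i \in A} a_i = \sum_{i \in B} a_i$, while each strict inequality $\sum_{i \in C} a_i > \sum_{i \in A} a_i$ becomes $0 > 0$, which is false. Hence $\mathbf{a} = 0$ is a solution precisely when the list of strict inequalities is empty, meaning that $A$ and $B$ are the only minimal vertex covers of $G$.

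Next I would prove the easy direction: if $G$ is the complete bipartite graph on $(A,B)$, then any vertex cover $C$ that omits some $v \in A$ and some $u \in B$ would fail to cover the edge $\{v,u\}$; so every vertex cover contains $A$ or $B$, and by minimality equals one of them.

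For the converse, assume $G$ is not complete bipartite. Since $G$ is connected with bipartition $(A,B)$, this forces $|A|, |B| \geq 2$ (otherwise a singleton side together with connectedness already gives a complete bipartite graph). By symmetry I may assume there exists $v \in A$ with $N(v) \subsetneq B$. I would then consider
$$C := (A \setminus \{v\}) \cup N(v),$$
and verify it is a vertex cover: any edge not incident to $v$ has its $A$-endpoint in $A \setminus \{v\}$, and any edge incident to $v$ has its $B$-endpoint in $N(v)$. Passing to any minimal cover $C' \subseteq C$, I would check $C' \neq A$ (since $v \notin C$) and $C' \neq B$ (since $B \not\subseteq C$, using $N(v) \subsetneq B$). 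This exhibits a third minimal cover, contradicting $\Min(G) = \{A,B\}$.

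The argument is short; the only mildly delicate step is ensuring that the shrinkage of $C$ to a minimal subcover cannot collapse to $B$, which is handled by the strict containment $N(v) \subsetneq B$. There is no real obstacle here — the lemma is essentially a translation between the combinatorics of minimal covers of bipartite graphs and the feasibility of $(*)$ at the origin.
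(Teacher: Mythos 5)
Your argument is correct and follows essentially the same route as the paper: reduce feasibility of $(*)$ at $\a = 0$ to the statement that $A$ and $B$ are the only minimal covers, and identify that condition with $G$ being complete bipartite. The only difference is that you spell out, via the cover $(A \setminus \{v\}) \cup N(v)$ and passage to a minimal subcover, the equivalence that the paper simply declares to be clear.
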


\begin{proof}
It is clear that $(*)$ has the solution $\a = 0$ if and only if there is no $C \in \Min(G), C \neq A,B.$
This condition is satisfied if and only if every vertex of $A$ is adjacent to every vertex of $B$. 
That means $G$ is a complete bipartite graph. 
\end{proof}


\section{Graph parallelization}

In this section we give a graph-theoretical characterization of the solutions $\a \neq 0$ of the system $(*)$.
For that we need the following notions.

For every $\a \in \NN^n$ we denote by $G^\a$ the graph obtained from $G$ by deleting the vertices $i$ with $a_i = 0$ and replacing every vertex $i$ with $a_i > 0$ by the vertices $i_r$, $r = 1,...,a_i$, 
where $\{i_r,j_s\}$ is an edge of $G^\a$ if $\{i,j\}$ is an edge of $G$ with $a_i > 0$, $a_j > 0$. 
Following \cite[77.3]{Sch} we call $G^\a$ the $\a$--{\it parallelization} of $G$.

The vertex $i_r$ is called a {\it duplication} of $i$. The duplication vertices are characterized by the properties that they are non adjacent to each other and have the same neighborhood. Recall that the neighborhood of a vertex $v$ is the set of all adjacent vertices, which is denoted by $N(v)$.

\begin{figure}[ht!]
\begin{tikzpicture}[scale=0.5] 

\draw [thick] (0,0) coordinate (a) -- (2,0) coordinate (b) ;
\draw [thick] (2,0) coordinate (b) -- (4,0) coordinate (c) ;
\draw [thick] (4,0) coordinate (c) -- (6,0) coordinate (d) ; 

\draw [thick] (12,0) coordinate (e) -- (14,0) coordinate (f);
\draw [thick] (14,0) coordinate (f) -- (16,0) coordinate (g);
\draw [thick] (16,0) coordinate (g) -- (18,0) coordinate (h);

\draw [thick] (12,0) coordinate (i) -- (14,2) coordinate (i);
\draw [thick] (14,0) coordinate (f) -- (16,2) coordinate (j);
\draw [thick] (16,0) coordinate (g) -- (14,2) coordinate (i);
\draw [thick] (14,2) coordinate (i) -- (16,2) coordinate (j);
\draw [thick] (18,0) coordinate (h) -- (16,2) coordinate (j);

\fill (a) circle (4pt);
\fill (b) circle (4pt);
\fill (c) circle (4pt);
\fill (d) circle (4pt);
\fill (e) circle (4pt);
\fill (f) circle (4pt);
\fill (g) circle (4pt);
\fill (h) circle (4pt);
\fill (i) circle (4pt);
\fill (j) circle (4pt);

\draw (0,-1) node{1};
\draw (2,-1) node{2};
\draw (4,-1) node{3};
\draw (6,-1) node{4};
\draw (12,-1) node{$1_1$};
\draw (14,-1) node{$2_1$};
\draw (16,-1) node{$3_1$};
\draw (18,-1) node{$4_1$};
\draw (14,3) node{$2_2$};
\draw (16,3) node{$3_2$};

\draw (3,-3) node{$G$};
\draw (15,-3) node{$G^\a$};
\draw (9.3,-3) node{$\a = (1,2,2,1)$};
 
\end{tikzpicture}
\caption{}
\end{figure}

\begin{Example} \label{parallel}
Let $G$ be the path $1,2,3,4$ and $\a = (1,2,2,1)$. Then $G^\a$ is a graph on 6 vertices and 8 edges as in  Figure 1. 
\end{Example}

Let $V^\a$ denote the vertex set of $G^\a$. 
For every vertex $i_r \in V^\a$ we set $p(i_r) = i$.
This defines a map $p: V^\a \to V$, which we call the {\it projection} of $V^\a$ to $V$.

Let $\supp(\a) = \{i|\ a_i \neq 0\}$. We call the induced subgraph $G_{\supp(\a)}$ of $G$ the {\it base graph} of $G^\a$. 
Actually, $G^\a$ is a parallelization of $G_{\supp(\a)}$. 
There is an one-to-one correspondence between their minimal covers. 

\begin{Lemma} \label{cover}
Let $U = \supp(\a)$. 
A subset $Z \subseteq V^\a$ is a minimal cover of $G^\a$ if and only if $Z = p^{-1}(D)$ for some minimal cover $D$ of $G_U$.
\end{Lemma}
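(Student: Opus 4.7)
The plan is to verify both implications directly using the defining property of parallelization, namely that the duplications $i_1,\dots,i_{a_i}$ of a vertex $i\in U$ are pairwise non-adjacent and share a common neighborhood $\{j_s : \{i,j\}\in E(G_U),\ 1\le s\le a_j\}$ in $G^{\a}$.

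For the direction ($\Leftarrow$), I would take a minimal cover $D$ of $G_U$ and argue that $Z:=p^{-1}(D)$ is a minimal cover of $G^{\a}$. Cover: any edge of $G^{\a}$ has the form $\{i_r,j_s\}$ with $\{i,j\}\in E(G_U)$, and since $D$ covers this edge, $i\in D$ or $j\in D$, hence $i_r\in Z$ or $j_s\in Z$. Minimality: for $i_r\in Z$ one has $i\in D$, and minimality of $D$ produces a neighbor $j$ of $i$ in $G_U$ with $j\notin D$; then $\{i_r,j_1\}$ is an edge of $G^{\a}$ (using $a_i,a_j>0$) whose endpoints both lie outside $Z\setminus\{i_r\}$, so $Z\setminus\{i_r\}$ fails to be a cover.

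For the direction ($\Rightarrow$), the key intermediate claim is that a minimal cover $Z$ of $G^{\a}$ must contain either all or none of the duplications of any $i\in U$. This is the one place that genuinely uses the structure of $G^{\a}$ and is the main obstacle to guard against: suppose on the contrary $i_r\in Z$ while $i_s\notin Z$. Then every neighbor $j_u$ of $i_s$ in $G^{\a}$ must satisfy $j_u\in Z$ to cover the edge $\{i_s,j_u\}$; but $i_r$ has the same neighborhood as $i_s$, so every edge incident to $i_r$ is already covered at the other endpoint, contradicting minimality of $Z$ via the removal of $i_r$.

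Granted the claim, set $D:=p(Z)\subseteq U$; then $Z=p^{-1}(D)$. To see $D$ is a cover of $G_U$: for $\{i,j\}\in E(G_U)$ the edge $\{i_1,j_1\}\in E(G^{\a})$ is covered by $Z$, so $i\in D$ or $j\in D$. To see $D$ is minimal: if $D\setminus\{i\}$ were already a cover of $G_U$, the forward direction just proved would give that $p^{-1}(D\setminus\{i\})=Z\setminus p^{-1}(i)\subsetneq Z$ is a cover of $G^{\a}$, contradicting minimality of $Z$. This completes both directions and establishes the bijective correspondence $D\leftrightarrow p^{-1}(D)$ between minimal covers of $G_U$ and minimal covers of $G^{\a}$.
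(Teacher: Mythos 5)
Your proof is correct and follows essentially the same route as the paper's: the key step in both is that a minimal cover of $G^{\a}$ must contain all or none of the duplications of a vertex, because duplications are non-adjacent and share the same neighborhood, so a partially included duplication could be removed, contradicting minimality. The remaining verifications (preimage of a cover is a cover, and the two minimality checks by contradiction) match the paper's argument up to the order in which the two implications are treated.
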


\begin{proof}
Let $Z$ be a minimal cover of $G^\a$ and $D = p(Z)$.
If $Z \neq p^{-1}(D)$, there are vertices $v \in Z$, $w \not\in Z$ such that $p(v) = p(w)$.
Since $Z$ is a cover of $G^\a$, $N(w) \subseteq Z$.
By the definition of a parallelization, $N(v) = N(w) \subseteq Z$.
Therefore, we can remove $v$ from $Z$ and still obtain a cover of $G^\a$, which is a contradiction to the minimality of $Z$. So we get $Z = p^{-1}(D)$. \par

Since $Z$ meets all edges of $G^\a$, $D$ meets all edges of $G_U$. 
Hence, $D$ is a cover of $G_U$.
If $D$ is not a minimal cover of $G_U$, there is a vertex $v \in D$ such that
$D - v$ is a cover of $G_U$. Since $Z = p^{-1}(D)$, $Z \setminus p^{-1}(v)$ is also a cover of $G^\a$, which is a contradiction to the minimality of $Z$. So we can conclude that $D$ is a minimal cover of $G$.\par 

Conversely, let $Z = p^{-1}(D)$ for some minimal cover $D$ of $G_U$.
By the definition of parallelization, $Z$ is also a cover of $G^\a$.
Let $Z' \subseteq Z$ be a minimal cover of $G^\a$.
By the first part of this proof, there is a minimal cover $D'$ of $G_U$ such that $Z' = p^{-1}(D')$.
Since $D' = p(Z') \subseteq p(Z) = D$, the minimality of $D$ implies $D' = D$, whence $Z' = Z$.
\end{proof}

\begin{Lemma} \label{size}
Let  $U = \supp(\a)$. For every minimal cover $Z$ of $G^\a$, there is a minimal cover $C$ of $G$ such that $Z = p^{-1}(C \cap U)$ and
$$|Z| = \sum_{i \in C}a_i.$$
\end{Lemma}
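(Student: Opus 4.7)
The plan is to use Lemma \ref{cover} to reduce the statement to a cover-extension problem, and then build a minimal cover of $G$ that restricts along $U$ to the given minimal cover of $G_U$.

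First I would note that by Lemma \ref{cover}, every minimal cover $Z$ of $G^\a$ has the form $Z = p^{-1}(D)$ for some minimal cover $D$ of $G_U$. Since the fiber $p^{-1}(i)$ has exactly $a_i$ elements for $i \in U$ and $a_i = 0$ for $i \notin U$, one has $|Z| = \sum_{i \in D} a_i$. Thus the claim reduces to producing a minimal cover $C$ of $G$ with $C \cap U = D$: for then $p^{-1}(C \cap U) = p^{-1}(D) = Z$, and
$$\sum_{i \in C} a_i \;=\; \sum_{i \in C \cap U} a_i \;=\; \sum_{i \in D} a_i \;=\; |Z|,$$
because $a_i = 0$ for $i \in V \setminus U$.

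To construct $C$, I would begin with $C_0 := D \cup (V \setminus U)$. This is a cover of $G$: every edge either lies in $G_U$ and is covered by $D$, or has an endpoint in $V \setminus U$ and is covered there. Now greedily remove vertices from $C_0$ that lie in $V \setminus U$, one at a time, as long as the remaining set is still a cover of $G$; let $C$ be the resulting set. By construction $D \subseteq C \subseteq D \cup (V \setminus U)$, so $C \cap U = D$, and no vertex of $C \setminus D$ can be removed from $C$.

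The one point where minimality of $D$ really enters — and the main thing to verify — is that no vertex $v \in D$ is redundant in $C$ either. Since $D$ is a \emph{minimal} cover of $G_U$, there is an edge $\{v,w\} \in G_U$ with $w \notin D$. Then $w \in U \setminus D = U \setminus (C \cap U)$, so $w \notin C$, which means $v$ is the unique endpoint of $\{v,w\}$ in $C$ and cannot be removed. Hence $C$ is a minimal cover of $G$ with the required properties. This last step is the only genuine combinatorial observation; everything else is bookkeeping around the reduction and the greedy trimming restricted to $V \setminus U$.
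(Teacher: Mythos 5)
Your proof is correct and follows essentially the same route as the paper's: reduce via Lemma \ref{cover} to extending the minimal cover $D$ of $G_U$ to a minimal cover $C$ of $G$ with $C \cap U = D$, and then conclude the size equality because $a_i = 0$ for $i \notin U$. The only difference is the extension mechanism --- the paper extends the independent set $U \setminus D$ to a maximal independent set of $G$ and takes its complement, while you add $V \setminus U$ and greedily trim --- and both versions ultimately rest on the same observation that minimality of $D$ forces every vertex of $D$ to have a neighbor in $U \setminus D$.
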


\begin{proof}
By Lemma \ref{cover}, $Z = p^{-1}(D)$ for some minimal cover $D$ of $G_U$. 
Since $U - D$ is a maximal independent set in $U$, 
we can extend $U - D$ to a maximal independent set in $V$. 
The complement of this set is a minimal cover $C$ of $G$. It is clear that $D =  C \cap U$.
Since $a_i = 0$ for $i \not\in U$, we have
$$|Z| = \sum_{i \in D}a_i = \sum_{i \in C}a_i.$$ 
\end{proof}

Let $G$ be a connected bipartite graph with bipartition $(A,B)$. Let $\a \neq 0$ be a solution of the system ($*$) in Section 1.
Applying Lemma \ref{size} to ($*$), we can see that $G^\a$ has only two minimal covers of smallest possible size. Such a minimal cover is called a {\it minimum cover}. In this case, $G^\a$ belongs to the following well-known class of graphs.

A connected graph is called {\em matching-covered}  (or 1--extendable graph) if it is not an isolated vertex and every edge is contained in a perfect matching \cite{Lo2,LP}.

\begin{Theorem} \label{LP} \cite[Theorem 4.1.1\! (ii)\! $\Leftrightarrow$\! (v)]{LP}
Let $K$ be a connected bipartite graph with a bipartition $(X,Y)$.
Then $K$ is matching-covered if and only if $K$ has exactly two minimum covers, namely $X,Y$.
\end{Theorem}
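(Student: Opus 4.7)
The plan is to prove both implications via K\"onig's theorem together with a counting argument on the partition of $V(K)$ induced by any hypothetical third minimum cover. For the forward direction, assume $K$ is matching-covered. Since $K$ has a perfect matching, $|X|=|Y|$ is both the matching number and, by K\"onig's theorem, the cover number, so $X$ and $Y$ are themselves minimum covers. To rule out any third minimum cover $C$, I would set $X'':=X\cap C$, $Y'':=Y\cap C$, $X':=X\setminus C$, $Y':=Y\setminus C$; comparing $|C|=|X''|+|Y''|$ with $|X|=|X'|+|X''|$ and $|Y|=|Y'|+|Y''|$ gives $|X'|=|Y''|$ and $|Y'|=|X''|$, and the assumption $C\neq X,Y$ forces all four parts to be nonempty.

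Because $C$ is a cover, there is no edge between $X'$ and $Y'$, so in any perfect matching each vertex of $X'$ must be paired with one in $Y''$; by $|X'|=|Y''|$ this exhausts $Y''$, and symmetrically the pairing of $Y'$ into $X''$ exhausts $X''$. Therefore no edge with one endpoint in $X''$ and the other in $Y''$ can lie in any perfect matching. The matching-covered hypothesis forbids such edges, so no edge of $K$ actually joins $X''$ to $Y''$; but then $K$ splits into the two nonempty induced subgraphs on $X'\cup Y''$ and $X''\cup Y'$, contradicting the connectedness built into matching-coveredness.

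For the converse, assume $X$ and $Y$ are the only minimum covers. Then $|X|=|Y|$ equals the K\"onig cover number, so a perfect matching exists. If some edge $\{x,y\}$ lay in no perfect matching, then $K-\{x,y\}$ could not have a perfect matching either (extending such a matching by $\{x,y\}$ would give one in $K$), and K\"onig would supply a cover $D$ of $K-\{x,y\}$ with $|D|\leq|X|-2$; adjoining $\{x,y\}$ then produces a minimum cover of $K$ containing both $x$ and $y$, which differs from both $X$ and $Y$ and contradicts the hypothesis. I expect the main obstacle to be the forward direction, whose crux is the double-counting identity $|X'|=|Y''|$: it pins down exactly how every perfect matching routes its edges and enables the clean dichotomy between an edge existing inside $X''\times Y''$ (contradicting matching-coveredness) and no such edge existing (contradicting connectedness).
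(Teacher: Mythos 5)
The paper never proves this statement --- it is imported verbatim from Lov\'asz and Plummer \cite{LP} (Theorem 4.1.1) --- so there is no internal argument to compare against; what you give is a correct, self-contained proof of the standard K\"onig type, and it is worth having. Your forward direction is sound: since a perfect matching exists, K\"onig's theorem makes $X$ and $Y$ minimum covers; for a hypothetical third minimum cover $C$ the counting $|X'|=|Y''|$, $|Y'|=|X''|$ with all four classes nonempty, the fact that $C$ being a cover forbids $X'$--$Y'$ edges, and the resulting dichotomy (an $X''$--$Y''$ edge could lie in no perfect matching, while the absence of such edges disconnects $K$ into $X'\cup Y''$ and $X''\cup Y'$) correctly exploits the connectedness that the paper builds into the definition of matching-covered. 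The converse mechanism (an edge $\{x,y\}$ in no perfect matching forces, via K\"onig applied to $K-x-y$, a cover $D$ with $|D|\le |X|-2$, and $D\cup\{x,y\}$ is then a minimum cover distinct from both $X$ and $Y$) is also correct.

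One detail is missing in the converse. The statement assumes only that $K$ is bipartite, while ``matching-covered'' is defined in the paper to mean \emph{connected}, not an isolated vertex, and every edge in a perfect matching; you verify the matching properties but never connectedness, even though your forward direction relies on it. The patch is short: under your hypothesis $K$ has no isolated vertices (an isolated vertex lying in $X$, say, could be removed from $X$, contradicting minimality of $X$), and a cover of $K$ is minimum exactly when its trace on each connected component is a minimum cover of that component; since every component contains an edge, its traces of $X$ and of $Y$ are two distinct minimum covers of that component, so two or more components would yield at least four minimum covers of $K$. Hence $K$ is connected, and your argument then gives matching-coveredness in the paper's sense. With that sentence added, your proof is complete and can stand in place of the citation.
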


\begin{Example}
By definition, any path of length $\ge 3$ is not matching-covered. 
This can be explained by Theorem \ref{LP}. For instance, the path $1,2,3$ has only a minimum cover 1 and the path
$1,2,3,4$ has 3 minimum covers $\{1,3\}$, $\{2,3\}$, $\{2,4\}$.
\end{Example}

Using the notion of matching-covered graph we can give a combinatorial characterization of the solutions $\a \neq 0$ of the system {\rm ($*$)}.

\begin{Theorem} \label{matching-covered}
Let $G$ be a connected bipartite graph with bipartition $(A,B)$. 
Then $\a \neq 0$ is a solution of the system {\rm ($*$)} if and only if the following conditions are satisfied: \par
{\rm (1)} $\supp(\a)$ is a dominating set of $G$,\par
{\rm (2)} $G^\a$ is a matching-covered graph.
\end{Theorem}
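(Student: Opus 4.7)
The plan is to translate the arithmetic system $(*)$ into conditions on the minimum covers of the parallelization $G^\a$ and then invoke Theorem~\ref{LP}. Write $U = \supp(\a)$, $A^\a := p^{-1}(A \cap U)$ and $B^\a := p^{-1}(B \cap U)$, so that $G^\a$ is bipartite with parts $(A^\a, B^\a)$ of sizes $\sum_{i\in A}a_i$ and $\sum_{i\in B}a_i$. By Lemmas~\ref{cover} and~\ref{size}, the minimal covers of $G^\a$ are exactly the sets $p^{-1}(D)$ for $D$ a minimal cover of $G_U$, each of size $\sum_{i\in C}a_i$ for any minimal cover $C$ of $G$ with $C \cap U = D$. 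The preliminary observation I would establish is that if $U$ is dominating, then a minimal cover $C$ of $G$ with $C\cap U = A\cap U$ (respectively $B\cap U$) must equal $A$ (respectively $B$): each $v\in A\setminus U$ has a neighbor $w\in U$ by dominance, $w$ lies in $B\cap U$ and hence outside $C$, so covering $\{v,w\}$ forces $v\in C$; thus $A\subseteq C$, and minimality gives $C=A$.

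For the ``if'' direction, Theorem~\ref{LP} identifies $A^\a$ and $B^\a$ as the only minimum covers of $G^\a$, which yields $\sum_{i\in A}a_i = \sum_{i\in B}a_i$. To verify the strict inequalities, fix a minimal cover $C\neq A,B$ of $G$, set $D = C\cap U$, and choose a minimal cover $D'\subseteq D$ of $G_U$. The preliminary observation gives $D \notin \{A\cap U, B\cap U\}$. If also $D'\notin\{A\cap U, B\cap U\}$, then $p^{-1}(D')$ is a non-minimum cover of $G^\a$, so $\sum_{i\in C}a_i = \sum_{i\in D}a_i \ge \sum_{i\in D'}a_i > \sum_{i\in A}a_i$. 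If $D' = A\cap U$, then $A\cap U \subsetneq D$, so some $v\in D\cap B$ has $a_v>0$, yielding $\sum_{i\in C}a_i \ge \sum_{i\in A}a_i + a_v > \sum_{i\in A}a_i$; the case $D' = B\cap U$ is symmetric via the equality already proved.

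For the ``only if'' direction, assume $\a\ne 0$ solves $(*)$. To establish (1), suppose some $v\in V\setminus U$ has $N(v)\cap U = \emptyset$, say $v\in A$. Then $(A\setminus\{v\})\cup N(v)$ is a cover whose $\a$-weight is at most $\sum_{i\in A}a_i$ (since $a_v=0$ and $N(v)\subseteq V\setminus U$). Any minimal sub-cover $C'$ contains $N(v)$ and omits $v$, so $C'\ne A$; the option $C'=B$ forces $N(v)=B$, whence $\sum_{i\in B}a_i = 0$ and $\a=0$, a contradiction; and $C'\notin\{A,B\}$ violates $(*)$. Hence $U$ is dominating.

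For (2), I would show $G^\a$ is connected, after which the classification of its minimum covers as exactly $A^\a, B^\a$ follows from $(*)$ by the extension argument used above, and Theorem~\ref{LP} concludes. If $G_U$ is disconnected, write $U = U_1 \sqcup U_2$ with no edges between. The two alternating covers $(A\cap U_1)\cup(B\cap U_2)$ and $(B\cap U_1)\cup(A\cap U_2)$ of $G_U$ have total $\a$-weight $2\sum_{i\in A}a_i$, so one of them, say the first, has weight at most $\sum_{i\in A}a_i$. Taking a minimal sub-cover of $G_U$ contained in it and extending via Lemma~\ref{size} to a minimal cover $C$ of $G$, condition $(*)$ forces $C\in\{A,B\}$; a direct check then yields $U_2\subseteq B$ (if $C=A$) or $U_1\subseteq A$ (if $C=B$). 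In the first case any $v\in U_2$ satisfies $a_v>0$ and $N(v)\cap U=\emptyset$, and the cover $(B\setminus\{v\})\cup N(v)$ has $\a$-weight $\sum_{i\in B}a_i - a_v < \sum_{i\in A}a_i$; repeating the dichotomy from (1) then yields $\a=0$, a contradiction. This connectedness step is the main obstacle: whereas (1) uses a vertex outside $U$ of zero weight, producing only a non-strict inequality, the connectedness contradiction requires unearthing a vertex inside $U$ of positive weight whose neighborhood avoids $U$, and performing the ``replace $v$ by $N(v)$'' surgery on the opposite part.
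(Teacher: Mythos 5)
Your proposal is correct, and its skeleton is the same as the paper's: translate $(*)$ into statements about minimal covers of $G^\a$ via Lemmas~\ref{cover} and~\ref{size}, prove dominance by passing from a non-dominated vertex $v$ to a minimal cover inside $(A\setminus\{v\})\cup N(v)$, and conclude matching-coveredness from Theorem~\ref{LP}. There are two genuine points of divergence, both sound. First, you prove connectedness of $G_U$ separately (via the alternating covers $(A\cap U_1)\cup(B\cap U_2)$ and $(B\cap U_1)\cup(A\cap U_2)$ and the surgery $(B\setminus\{v\})\cup N(v)$ on a vertex of positive weight isolated in $G_U$); the paper omits this step entirely, reading Theorem~\ref{LP} as stated for a bipartite graph with a fixed bipartition: if the two sides are the only minimum covers, the graph can have no isolated vertex and cannot be disconnected (mixing componentwise minimum covers would produce a third minimum cover), so connectedness is absorbed into the two-minimum-cover condition. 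Your extra argument is thus redundant under that reading, but it buys a proof that remains valid if one insists on the connected-graph formulation of Lov\'asz--Plummer's Theorem 4.1.1, and it is correct as written. Second, in the direction where (1) and (2) are assumed, you organize the strict inequalities around the observation that a minimal cover $C$ with $C\cap U$ \emph{equal} to $A\cap U$ or $B\cap U$ must be $A$ or $B$ (using dominance only), followed by a case analysis on the minimal sub-cover $D'\subseteq C\cap U$; the paper instead shows $C\cap U\not\subseteq A\cap U$, a step that implicitly also uses that every vertex of $A\cap U$ has a neighbour in $B\cap U$ (no isolated vertices in $G_U$, coming from condition (2)). Your equality-based variant sidesteps that implicit ingredient and arrives at the same three-case estimate.
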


\begin{proof}
Let $\a \neq 0$ be a solution of the system {\rm ($*$)} and $U = \supp(\a)$. Note that $a_i = 0$ for $i \not\in U$. \par

Assume that $U$ is not a dominating set of $G$. 
Then there exists a vertex $v \not\in U$ that is not adjacent to any vertex in $U$.
This means $N(v) \cap U = \emptyset$. Hence $a_i = 0$ for $i \in N(v)$.
Without restriction, we may assume that $v \in A$. Then $N(v) \subseteq B$.
Since $A$ is a cover of $G$, $(A - v) \cup N(v)$ is a cover of $G$.
Hence, $(A - v) \cup N(v)$ contains a a minimal cover $C \neq A$.
We have 
$$\sum_{i \in C} a_i \le  \sum_{i \in (A - v) \cup N(v)}a_i = \sum_{i \in A-v}a_i  \le \sum_{i \in A}a_i.$$
By $(*)$, we must have $C = B$. 
Since $C \cap (A-v) = B \cap (A-v)  = \emptyset$ and $C \subset (A-v) \cup N(v)$, we get $B = C \subseteq N(v)$. Hence $B = N(v)$. 
Therefore,
$$\sum_{i \in A}a_i = \sum_{i \in B}a_i = \sum_{i \in N(v)}a_i =0.$$
So we get $\a = 0$, a contradiction. This proves (1). \par

To prove (2) we observe that $\a \neq 0$ implies $\sum_{i\in A}a_i = \sum_{i\in B}a_i  \neq 0$.
From this it follows that $A \cap U \neq \emptyset$ and $B \cap U \neq \emptyset$. 
Therefore, $(A \cap U, B\cap U)$ is a bipartition of $G_U$. 
Let $X = p^{-1}(A \cap U)$ and $Y = p^{-1}(B \cap U)$. 
Then $(X,Y)$ is a bipartition of $G^\a$.
By Theorem \ref{LP}, it suffices to show that $G^\a$ has exactly two minimum covers $X,Y$.
Let $Z$ be an arbitrary minimal cover $G^\a$.  
By Lemma \ref{size}, there exists a minimal cover $C$ of $G$ such that $Z = p^{-1}(C \cap U)$ and
 $$|Z| = \sum_{i \in C \cap U}a_i = \sum_{i \in C}a_i.$$
 If $Z \neq X,Y$, then $C \neq A,B$. By ($*$) we have
 $$|X| = \sum_{i \in A}a_i = \sum_{i \in B}a_i = |Y| < \sum_{i \in C}a_i = |Z|.$$
 Therefore, $X$ and $Y$ are exactly the minimum covers of $G^\a$. 

Now we are going to prove the converse of Theorem \ref{matching-covered}.
Assume that the conditions (1) and (2) are satisfied. 
Let $X = p^{-1}(A \cap U)$ and $Y = p^{-1}(B \cap U)$.
Then $G^\a$ must be a connected bipartite graph with bipartition $(X,Y)$.
By (2) and Theorem \ref{LP}, $X,Y$ are the only minimum covers of $G^\a$. 
Therefore,
$$|X| = |Y| < |Z| \text{ for all } Z \in \Min(G^\a),\ Z \neq X,Y.$$
From the equation $|X| = |Y|$ we get
$$\sum_{i \in A}a_i =  \sum_{i \in B}a_i.$$

Let $C \in \Min(G), C \neq A, B$.
If $C \cap U \subseteq  A \cap U$, $(B \cap U) \cap C = \emptyset.$
Since $C$ is a minimal cover, every vertex adjacent to $B \cap U$ belongs to $C$.
From this it follows that $A-U \subseteq C$ because every vertex of $A - U$ is adjacent to $B \cap U$ by (1). 
Therefore, $A = (A -U) \cup (A \cap U) \subseteq C$, which implies $A = C$, a contradiction.
So $C \cap U \not\subseteq A \cap U$. Hence $B \cap (C \cap U) \neq \emptyset$, which implies
$\displaystyle \sum_{i \in B \cap C}a_i \ge 1$. 

Since $C \cap U$ is a cover of $G_U$, we can find a minimal cover $D \subseteq C \cap U$ of $G_U$.
Let $Z = p^{-1}(D)$.
Then $Z$ is a minimal cover of $G^\a$ by Lemma \ref{cover}. 
If $Z = X$, then $D = A \cap U$. Therefore,
$$\sum_{i \in C}a_i = \sum_{i \in A \cap C}a_i + \sum_{i \in B \cap C}a_i > \sum_{i \in A \cap D}a_i = \sum_{i \in A \cap U}a_i = \sum_{i \in A}a_i.$$
Similarly, if $Z = Y$, we also have 
$$\sum_{i \in C}a_i > \sum_{i \in B}a_i = \sum_{i \in A}a_i.$$
If $Z \neq X,Y$, then
$$\sum_{i \in C}a_i \ge \sum_{i \in D}a_i = |Z| > |X| = \sum_{i \in A}a_i.$$
Therefore, $\a$ is a solution of $(*)$. The proof of Theorem \ref{matching-covered} is now complete.
\end{proof}

The base graph of a matching-covered parallelization can be described as follows. 
For convenience, we say that $G_U$ is a dominating subgraph of $G$ if $U$ is a dominating set of $G$.

\begin{Corollary} \label{base}
Let $\a \neq 0$ be a solution of the system $(*)$ and $U = \supp(\a)$. 
Then $G_U$ is a dominating connected bipartite subgraph of $G$.
\end{Corollary}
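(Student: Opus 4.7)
The plan is to apply Theorem \ref{matching-covered} directly and then argue that connectedness of $G^\a$ descends to $G_U$ via the projection $p$.

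First, since $\a$ is a solution of $(*)$, Theorem \ref{matching-covered} gives that $U = \supp(\a)$ is a dominating set of $G$, so by definition $G_U$ is a dominating subgraph of $G$. Bipartiteness is automatic: $G_U$ is an induced subgraph of the bipartite graph $G$, and in fact its bipartition is $(A \cap U, B \cap U)$, both parts being nonempty because $\a \neq 0$ forces $\sum_{i \in A} a_i = \sum_{i \in B} a_i > 0$.

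The remaining point, and the only nontrivial one, is connectedness of $G_U$. By Theorem \ref{matching-covered}, $G^\a$ is matching-covered, and by the very definition of a matching-covered graph recalled in the paper, it is connected. The idea is to push this connectedness through the projection $p : V^\a \to U$. Concretely, for any two vertices $i, j \in U$ we have $p^{-1}(i)$ and $p^{-1}(j)$ nonempty (since $a_i, a_j \geq 1$); pick $i_r \in p^{-1}(i)$ and $j_s \in p^{-1}(j)$ and a path $i_r = w_0, w_1, \ldots, w_k = j_s$ in $G^\a$. By the definition of the parallelization, every edge $\{w_\ell, w_{\ell+1}\}$ of $G^\a$ lies over an edge $\{p(w_\ell), p(w_{\ell+1})\}$ of $G_U$ (with $p(w_\ell) \neq p(w_{\ell+1})$ since duplications of the same vertex form an independent set). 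Thus $p(w_0), p(w_1), \ldots, p(w_k)$ is a walk in $G_U$ from $i$ to $j$, showing that $G_U$ is connected.

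There is no real obstacle here; the only thing to be careful about is to invoke the right equivalence in Theorem \ref{matching-covered} and to recall that matching-covered graphs are connected by definition, so that the projection argument applies without additional hypotheses.
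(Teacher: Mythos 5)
Your proof is correct and follows essentially the same route as the paper: invoke Theorem \ref{matching-covered} for the dominating property and for the fact that $G^\a$ is matching-covered (hence connected and not an isolated vertex), transfer connectedness from the parallelization $G^\a$ down to its base graph $G_U$, and note that bipartiteness is automatic for an induced subgraph of a bipartite graph. Your projection-of-paths argument simply spells out the step the paper leaves implicit in the sentence ``Since $G^\a$ is a parallelization of $G_U$, $G_U$ is \dots connected, too.''
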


\begin{proof}
By Theorem \ref{matching-covered}(1), $G_U$ is a dominating subgraph of $G$.
By Theorem \ref{matching-covered}(2), $G^\a$ is a matching-covered graph. 
Hence, it is not an isolated vertex and connected.
Since $G^\a$ is a parallelization of $G_U$, $G_U$ is also not an isolated vertex and connected.
Since $G$ is bipartite,  such a connected induced graph is bipartite.  
\end{proof}

Using Theorem \ref{matching-covered} and Corollary \ref{base}, we can easily find some solutions of $(*)$.

\begin{Example} \label{solution-ex}
Let $G$ be the path $1,2,3,4$. For $\a = (1,2,2,1)$, $G^\a$ has exactly two minimum covers $\{1_1,3_1,3_2\}$ and $\{2_1,2_2,4_1\}$ (see Figure 1). Hence $G^\a$ is matching-covered by Theorem \ref{LP}. Since $V = \supp(\a)$, $(1,2,2,1)$ is a solution of $(*)$ by Theorem \ref{matching-covered}. However, $|\a| = 6$ is not the minimal order of the solutions of $(*)$. For $\a = (0,1,1,0)$ we have $U = \{2,3\}$, which is a dominating set of $G$, and $G_U$ is the edge $\{2,3\}$, which is matching-covered. Hence, $(0,1,1,0)$ is also a solution of $(*)$ with $|\a| = 2$.
\end{Example}


\section{Generalized ear decomposition}

In this section we want to estimate the order $|\a|$ of a matching-covered parallelization $G^\a$ in terms of $G$.
For this we need a characterization of matching-covered bipartite graph in terms of ear decomposition. \par

An {\it ear decomposition} of a connected graph is a partition of the edges into a sequence of paths $L_1,...,L_r$ such that $L_1$ is a cycle  and $L_i$, $i \ge2$, meets $L_1+\cdots+L_{i-1}$ exactly at the endpoints of $L_i$. Note that an open path has two endpoints and a cycle is a closed path whose endpoints coincide.
The paths $L_1,...,L_r$ are called {\it ears} of the decomposition. An ear is called even or odd if it has even or odd length, respectively. For convenience, we denote the partition by $L_1+\cdots+L_r$. 
A connected graph has an ear decomposition if and only if it is {\it 2-edge connected}, i.e. it remains connected after removing any edge \cite{Ro}. 

\begin{Theorem} \label{one even ear} \cite[Theorem 4.1.6]{LP}
A connected bipartite graph is matching-covered if and only if it has an ear decomposition with only one even ear.
\end{Theorem}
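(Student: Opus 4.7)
The plan is to handle the two directions separately. Sufficiency follows from a straightforward induction on the number of ears and reduces to a sublemma that uses Theorem \ref{LP}; necessity is the deeper direction, handled by a structural peeling argument, which I expect to be the main obstacle.

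For sufficiency, I would induct on the number $r$ of ears. When $r = 1$, $G = L_1$ is an even cycle, whose two complementary perfect matchings together cover every edge, so $G$ is matching-covered. For $r \geq 2$, set $G' := L_1 + \cdots + L_{r-1}$ (matching-covered by induction, since $L_1$ remains the only even ear) and write the odd ear $L_r$ as $u = w_0, w_1, \ldots, w_{2k+1} = v$ of length $2k+1$; since $L_r$ is odd and $G'$ is bipartite, the endpoints $u$ and $v$ lie on opposite sides of the bipartition of $G'$. To verify that every edge $e$ of $G := G' + L_r$ lies in a perfect matching of $G$, I distinguish two cases. First, if $e \in E(G')$ or $e$ is one of the edges $w_1 w_2, w_3 w_4, \ldots, w_{2k-1} w_{2k}$, take a perfect matching $M'$ of $G'$ containing $e$ (trivially if $e \notin E(G')$) and adjoin $\{w_1 w_2, w_3 w_4, \ldots, w_{2k-1} w_{2k}\}$ to match the interior of $L_r$, leaving $u, v$ matched by $M'$. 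Second, if $e$ is one of $u w_1, w_2 w_3, \ldots, w_{2k} v$, use instead the alternate near-perfect matching $\{u w_1, w_2 w_3, \ldots, w_{2k} v\}$ of $L_r$, which contains $e$ and matches $u, v$ inside $L_r$, then complete with a perfect matching of $G' - u - v$.

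The second case reduces sufficiency to the following sublemma: \emph{if $G'$ is a matching-covered bipartite graph with bipartition $(A', B')$ and $u \in A'$, $v \in B'$, then $G' - u - v$ has a perfect matching.} To establish it, observe that by Theorem \ref{LP} the only minimum vertex covers of $G'$ are $A'$ and $B'$, so for every nonempty proper subset $S \subsetneq A'$ one must have $|N_{G'}(S)| > |S|$; otherwise $(A' \setminus S) \cup N_{G'}(S)$ would be a minimum vertex cover of $G'$ distinct from both $A'$ and $B'$, contradicting Theorem \ref{LP}. Applied to any nonempty $S \subseteq A' \setminus \{u\}$, this yields $|N_{G' - v}(S)| \geq |N_{G'}(S)| - 1 \geq |S|$, and Hall's theorem then supplies the required matching in $G' - u - v$.

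For necessity, the plan is a reverse induction on the number of edges: given a matching-covered bipartite graph $G$ that is not a single even cycle, exhibit a removable odd ear $L_r$ (either a single edge whose deletion preserves matching-coveredness, or an odd path whose interior vertices have degree two in $G$ and whose removal preserves matching-coveredness) and recurse. The main obstacle is exhibiting such a removable ear, since arbitrary edge deletions in a matching-covered graph can easily destroy matching-coveredness, so one needs careful structural analysis. The classical argument, due to Hetyei and reproduced in \cite[Theorem 4.1.6]{LP}, extracts the ear by analyzing symmetric differences of well-chosen perfect matchings or equivalently by exploiting the barrier (tight-set) structure of $G$, and I would import that argument for this direction.
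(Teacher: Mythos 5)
Your proposal is correct as far as it goes, but it takes a different route from the paper: the paper gives no proof of this statement at all — it is imported wholesale as \cite[Theorem 4.1.6]{LP}, and the only original content is the remark immediately following it, which reconciles the two definitions of ear decomposition (in \cite{LP} the decomposition starts with a single edge $L_1$, $L_2$ meets $L_1$ at both endpoints, and the criterion is ``no even ears''; merging $L_1+L_2$ into an even cycle yields the formulation with exactly one even ear used here). By contrast, you actually prove the sufficiency direction from scratch: induction on the number of ears, using that $L_1$ is the unique even ear (forced, since any first-ear cycle in a bipartite graph is even), splitting the edges of the new odd ear into the two alternating near-matchings, and reducing to the sublemma that $G'-u-v$ has a perfect matching when $u,v$ lie on opposite sides; your derivation of the strict Hall condition $|N(S)|>|S|$ for $\emptyset \neq S \subsetneq A'$ from Theorem \ref{LP} (a violating $S$ would produce a cover of size at most $|A'|$ other than $A',B'$) is sound, and Hall's theorem finishes it. What your approach buys is a self-contained proof of one implication relying only on Theorem \ref{LP}, which is the implication the paper actually uses in Proposition \ref{construction}; what it does not buy is the necessity direction, which you, like the paper, ultimately delegate to \cite{LP} — if you do import \cite[Theorem 4.1.6]{LP} there, you should also include the paper's translation remark, since the cited theorem is stated for the edge-based ear decomposition with no even ears rather than for the cycle-based one with exactly one even ear.
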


The original result \cite[Theorem 4.1.6]{LP} uses a different definition of ear decomposition which requires $L_1$ to be an edge and $L_2$ meets $L_1$ in two endpoints. It says that a connected bipartite graph is matching-covered if and only if there is such an ear decomposition without even ears. In that case, $L_1+L_2$ is an even cycle, which together with the other ears form an ear decomposition with only one even ear. \par

If a parallelization $G^\a$ has an ear decomposition, we will lower the weight $\a$ step by step and see how the ear decomposition degenerates. 
For any $i \in \supp(\a)$ with $a_i \ge2$, $G^{\a-\e_i}$ can be obtained from $G^\a$ by deleting the vertex $i_{a_i}$.
Let $L_1+\cdots+L_r$ be an ear decomposition of $G^\a$.
Let $L_t'$ be the remainder of $L_t$ after this deletion, $t = 1,...,r$.
It may happen that a path $L_t'$ is an edge $\{u,v\}$ meeting $L_1'+\cdots +L_{t-1}'$ at only one endpoint $u$. In this case, $L_1'+\cdots +L_t'$ is not an ear decomposition of $G^{\a-\e_i}$.
So there is a need to modify the definition of ear decompositions in order to have a common framework for the procedure of deletions. For instance, we may view $L'_t$ as a closed walk $\{u,v,u\}$. Then $L'_t$ has only an endpoint $u$, and the condition $L'_t$ meets $L_1'+\cdots +L_{t-1}'$ only at the endpoints is satisfied.

\begin{Example}
Let $G^\a$ be as in Example \ref{parallel} (see Figure 2). It has an ear decomposition $L_1+L_2+L_3$, where $L_1$ is the 4-cycle $\{1_1,2_1,3_1,2_2,1_1\}$, $L_2$ is the path $\{2_1,3_2,4_1,3_1\}$ and $L_3$ is the edge $\{2_2,3_2\}$. This ear decomposition has only an even ear $L_1$. If we delete the vertex $3_2$, then $L_1' = L_1$, $L_2' = \{3_1,4_1\}$, $L_3' = \emptyset$. Since the endpoint $4_1$ of $L_2'$ does not belong to $L_1'$, 
$L_1'+L_2'$ is not an ear decomposition of $G^{\a-\e_3}$. 
\end{Example}

\begin{figure}[ht!]
\begin{tikzpicture}[scale=0.5] 

\draw [thick] (0,0) coordinate (a) -- (2,0) coordinate (b) ;
\draw [thick] (2,0) coordinate (b) -- (4,0) coordinate (c) ;
\draw [thick] (4,0) coordinate (c) -- (6,0) coordinate (d) ; 
\draw [thick] (2,0) coordinate (b) -- (4,2) coordinate (j);
\draw [thick] (4,0) coordinate (c) -- (2,2) coordinate (k);
\draw [thick] (2,2) coordinate (k) -- (4,2) coordinate (j);
\draw [thick] (0,0) coordinate (a) -- (2,2) coordinate (k);
\draw [thick] (6,0) coordinate (d) -- (4,2) coordinate (j);

\draw [thick] (12,0) coordinate (e) -- (14,0) coordinate (f);
\draw [thick] (14,0) coordinate (f) -- (16,0) coordinate (g);
\draw [thick] (16,0) coordinate (g) -- (18,0) coordinate (h);
\draw [thick] (12,0) coordinate (i) -- (14,2) coordinate (i);
\draw [thick] (16,0) coordinate (g) -- (14,2) coordinate (i);

\fill (a) circle (4pt);
\fill (b) circle (4pt);
\fill (c) circle (4pt);
\fill (d) circle (4pt);
\fill (e) circle (4pt);
\fill (f) circle (4pt);
\fill (g) circle (4pt);
\fill (h) circle (4pt);
\fill (i) circle (4pt);
\fill (j) circle (4pt);
\fill (k) circle (4pt);

\draw (0,-1) node{$1_1$};
\draw (2,-1) node{$2_1$};
\draw (4,-1) node{$3_1$};
\draw (6,-1) node{$4_1$};
\draw (2,3) node{$2_2$};
\draw (4,3) node{$3_2$};

\draw (12,-1) node{$1_1$};
\draw (14,-1) node{$2_1$};
\draw (16,-1) node{$3_1$};
\draw (18,-1) node{$4_1$};
\draw (14,3) node{$2_2$};
\draw (4,3) node{$3_2$};

\draw (3,-3) node{$G^\a$};
\draw (15,-3) node{$G^{\a-\e_3}$};
 
\end{tikzpicture}
\caption{}
\end{figure}

For convenience, we call a closed walk $\{u,v,u\}$ a {\it 2-cycle}, which has only an endpoint $u$. Depending on our choice, the edge $\{u,v\}$ can be considered as an 1-path or a 2-cycle with a specified endpoint. 

A {\it generalized ear decomposition} of a connected graph is a partition of the edges into a sequence of paths or 2-cycles $L_1,...,L_r$  such that $L_1$ is a cycle and $L_i$, $i \ge 2$, meets $L_1+\cdots+L_{i-1}$ exactly at the endpoints of $L_i$. 
In particular, $L_1$ may be a 2-cycle.

\begin{Remark} \label{existence}
The new notion has the advantage that any connected graph $G$ has at least a generalized ear decomposition.
To see that let $T$ be a spanning tree of $G$, where {\it spanning} means $T$ and $G$ has the same set of vertices. 
Order the edges of $T$ in such a way that every edge is incident to the union of the preceding edges. Then the ordered edges form a generalized ear decomposition of $G_T$, which consists of only 2-cycles. 
Since the remaining edges meet $G_T$ at both ends,
we can add them as paths of length 1 to such a decomposition to obtain a generalized ear decomposition of $G$. 
Similarly, we can show that any cycle of $G$ can be the first ear of a generalized ear decomposition whose subsequent ears are either 2-cycles or paths of length 1.
\end{Remark}

\begin{Example}
The graph $G^\a-\e_3$ of Figure 2 doesn't have any ear decomposition because it has a leaf edge. In fact, a leaf edge is not contained in any ear if we don't allow 2-cycles as ears (the leaf can not belong to any earlier ear). 
One can immediately see that $G^\a-\e_3$ has a generalized ear decompositions consisting of the 4-cycle and the leaf edge as a 2-cycle. If we remove the edge $\{2_2,3_1\}$, we have 
a spanning tree of $G^\a-\e_3$, e.g. the tree consisting of the path $1_1,2_1,3_1,4_1$ and the edge $\{1_1,2_2\}$. This tree has several generalized ear decompositions, e.g. the sequence of the 2-cycles $\{1_1,2_1\},\{2_1,3_1\},\{3_1,4_1\},\{1_1,2_2\}$. Adding the edge $\{2_2,3_1\}$ as an 1-path to this sequence we obtain a generalized ear decomposition of $G^\a-\e_3$.
\end{Example}

\begin{Remark} \label{properties}
We may represent an ear as an ordered sequence of vertices $v_1,...,v_r$ such that $v_i,v_{i+1}$ is an edge of the ear. The vertices $v_1,v_r$ are called the endpoints and $v_2,...,v_{r-1}$ the inner points of the ear.
A 2-cycle $\{v_1,v_2\}$ with endpoint $v_1$ is the sequence $v_1,v_2,v_1$, while an edge $\{v_1,v_2\}$ with two endpoints is the sequence $v_1,v_2$. 
From the definition of a generalized ear decomposition we can easily see that it has the following properties:
\begin{enumerate}
\item For every vertex $i$ different from the endpoints of the first ear, the earliest ear containing $i$ is the unique ear containing $i$ as an inner point.
\item If an ear contains an edge whose vertices belong to previous ears, then this ear consists of only this edge.
\end{enumerate}
\end{Remark}

Let $\varphi(\E)$ denote the number of even ears in a generalized ear decomposition $\E$.
The following result shows that a generalized ear decomposition of a parallelization always degenerates to a generalized ear decomposition of the deletion of a duplicated vertex such that the number of even ears increases at most by one. 

\begin{Lemma}  \label{merge} 
Let $K$ be a connected graph which has two nonadjacent vertices $i,j$ with the same neighborhood.
Let $\E$ be a generalized ear decomposition of $K$. 
Then $\E$ can be deformed to a generalized ear decomposition $\F$ of $K-j$ with $\varphi(\F) \le \varphi(\E)+1$.
\end{Lemma}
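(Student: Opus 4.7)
The plan is to construct $\F$ by deleting $j$ from the ears of $\E$ and then repairing the resulting fragments through the twin vertex $i$, using the identity $N(i) = N(j)$.

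First I would locate $j$ inside $\E$. By Remark~\ref{properties}(1), either $j$ is the basepoint of the initial cycle $L_1$, or $j$ is an inner point of a unique earliest ear $L_k$. If $j$ is the basepoint of $L_1$ with $|L_1| \ge 3$, I would rotate the cycle to choose a different basepoint, making $j$ an inner point of $L_1$; if $L_1$ is the 2-cycle $j,v,j$, dropping $L_1$ from $\E$ directly yields $\F$ with $\varphi$ decreasing by $1$. Apart from this earliest ear, $j$ may further appear as an endpoint of several later ears.

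Now assume $j$ is inner in some $L_k = v_0,\ldots,u,j,w,\ldots,v_p$, so $u,w \in N(j) = N(i)$. Deleting $j$ splits $L_k$ into the subpaths $P_1 = v_0,\ldots,u$ and $P_2 = w,\ldots,v_p$, whose new endpoints $u,w$ may not lie on earlier ears, so direct reuse of $P_1$ and $P_2$ as ears is not valid. The repair uses $i$: depending on whether the earliest ear containing $i$ precedes $L_k$, coincides with $L_k$, or lies later, I would (a) splice $P_1$ and $P_2$ into one new ear $v_0,\ldots,u,i,w,\ldots,v_p$ after a swap that relocates the edges $\{u,i\}$ and $\{i,w\}$ from their current ears, or (b) use $P_1$ and $P_2$ as two shorter ears jointly attached at $i$, or (c) interchange the roles of $i$ and $j$ in later ears before deleting. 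Each later ear $L_m$ in which $j$ is an endpoint is repaired similarly: shortened by the edge at $j$, and its new dangling end reattached through $i$ by a local swap.

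A parity check then yields $\varphi(\F) \le \varphi(\E)+1$, since splitting the single ear $L_k$ of length $l$ into pieces whose total length is $l-2$, or a single new ear of length $l$ through $i$, changes the even-ear count by at most $+1$, while the endpoint rewirings merely rename endpoints and preserve parity. The main obstacle is showing that the swaps used to free the edges $\{i,\cdot\}$ do not duplicate any edge nor violate the "endpoints on earlier paths" condition. This forces a case analysis on the position of $i$ relative to $L_k$ in $\E$ and on the structure of the ears currently carrying the edges $\{i,\cdot\}$ involved in the swaps, but in each subcase the twin hypothesis $N(i) = N(j)$ supplies an explicit canonical local rewiring that keeps the decomposition consistent.
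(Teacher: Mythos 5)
Your overall strategy --- delete $j$ and reroute the severed pieces through the twin $i$, then bookkeep parities --- is the same as the paper's, but your write-up defers exactly the step that carries the content of the lemma. You obtain the edges $\{u,i\}$ and $\{i,w\}$ by ``a swap that relocates [them] from their current ears,'' and you yourself identify checking that this does not duplicate an edge or violate the attachment condition as ``the main obstacle,'' but you never resolve it. The resolution is not an ad hoc local rewiring: it comes from the structural facts in Remark \ref{properties}. If $u$ is an inner point of the earliest ear $L_t$ containing $j$, then $L_t$ is the \emph{unique} ear containing $u$ as an inner point, so once $i$ lies on an earlier ear the edge $\{i,u\}$ must occur in $\E$ strictly later than $L_t$, and by Remark \ref{properties}(2) that later ear consists of the single edge $\{i,u\}$. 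Hence nothing needs to be ``relocated out of'' another ear: the required edges are already whole ears of length one, which the repaired path simply absorbs. Without this observation your swap is unjustified --- if $\{i,u\}$ sat in the interior of a longer ear, removing it would split that ear into pieces whose new endpoints need not lie on earlier ears, and you give no mechanism to fix that; one must also address the possibility that $\{i,u\}$ lies inside $L_t$ itself (when $i$ is an endpoint of $L_t$ adjacent to $u$), where the rerouted path would reuse that edge.

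Two further points are wrong or missing as stated. First, if $L_1$ is the 2-cycle $j,v,j$, simply dropping $L_1$ does not yield a generalized ear decomposition of $K-j$: later ears anchored at $j$, or only at $v$, lose their attachment, and the new first ear need not be a cycle; this case needs the same rerouting through $i$, or an exchange of the roles of $i$ and $j$, which is legitimate because $K-i \cong K-j$. Second, the later ears having $j$ as an endpoint must be handled by an explicit induction, since each such repair again consumes a later single-edge ear $\{i,u\}$, and the bound $\varphi(\F)\le\varphi(\E)+1$ must be tracked globally through that induction --- the increase by one is allowed only once, at the earliest $j$-ear. Your one-sentence parity remark does not account for the absorbed length-one ears, nor for the case where exactly one neighbour of $j$ is an endpoint of $L_t$ and the ear's length drops by one (changing its parity). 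These are precisely the cases the paper's proof separates and verifies; as written, your argument is a plausible plan but not yet a proof.
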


\begin{proof} 
Let $L_1, \cdots,L_r$ be the sequence of ears of $\E$. 
Let $\E_t$ denote the sequence $L_1, \cdots,L_t$, which is a generalized ear decomposition of the subgraph $L_1+\cdots+L_t$, $t = 1,...,r$.
We will show by induction that for every $t$, there exists $t' \ge t$ such that
$\E_{t'}$ can be deformed to a generalized ear decomposition $\F_{t'}$ of $L_1+\cdots+L_{t'}-j$
with $\varphi(\F_{t'}) \le \varphi(\E_{t'})+1$. Without loss of generality, we may assume that $j \in L_t$. 

Let $t$ be the least integer such that $j \in L_t$. Since the deletion of $i$ is the same as the deletion of $j$, we may further assume that $i$ belongs to $L_1+\cdots+L_t$. 
By Remark \ref{properties}(1), $j$ is an inner point of $L_t$. Let $u, v$ be the two vertices of $L_t$ adjacent to $j$. Then $u, v$ are also adjacent to $i$. We will distinguish several cases depending on the positions of $i, u,v$. \smallskip

Case 1: {\em $i$ is not an inner point of  $L_t$}. Then $i$ belongs to an earlier ear than $L_t$.

If $u,v$ are the endpoints of $L_t$, then $L_t$ is the path $u,j,v$. Set $\F_t = \E_{t-1}$. Then $\F_t$ is a generalized ear decomposition of $L_1+\cdots+L_t-j$ with  $\varphi(\F_t) = \varphi(\E_t)-1$.  

If only $u$ or $v$ is an endpoint of $L_t$, we assume that $u$ is an endpoint of $L_t$ and $v$ is an inner point of $L_t$. Then $L_t$ is the first ear containing $v$ by Remark \ref{properties}(1). 
If $\{i,v\}$ is an edge of $L_t$, then $L_t$ must be the path $\{u,j,v,i\}$ because $i$ is not an inner point of $L_t$. Adding the 2-cycle $\{i,v\}$ to $L_1,...,L_{t-1}$ we obtain a generalized ear decomposition $\F_t$ of $L_1 + \cdots + L_t-j$ with $\varphi(\F_t) \le \varphi(\E_t)+1$.
If $\{i,v\}$ is not an edge of $L_t$, then $\{i,v\}$ must belong to a late ear than $L_t$. That ear must be the edge $\{i,v\}$ by Remark \ref{properties}(2). Without restriction we may assume that $L_{t+1}$ is the edge $\{i,v\}$. Removing $j$ and adding the edge $\{i,v\}$ we obtain from $L_t$ a new path $L_t'$ starting from $i$ (see Figure 3). Note that $i$ belongs to $L_1+\cdots+L_{t-1}$.
Adding this path to $L_1,...,L_{t-1}$ we obtain a generalized ear decomposition $\F_{t+1}$ of $L_1 + \cdots + L_{t+1}-j$ with $\varphi(\F_{t+1}) \le \varphi(\E_{t+1})+1$.

\begin{figure}[ht!]
\begin{tikzpicture}[scale=0.45]

\draw [thick] (0,0) coordinate (a) -- (2,2) coordinate (b) ;
\draw [thick] (2,2) coordinate (b) -- (4,0) coordinate (c) ;
\draw [thick] (4,0) coordinate (c) -- (7,0) coordinate (d) ;
\draw [thick, dashed] (4,0) coordinate (c) -- (2,-2) coordinate (e) ;

\draw [thick, dashed] (12,0) coordinate (g) -- (14,2) coordinate (h) ;
\draw [thick, dashed] (14,2) coordinate (h) -- (16,0) coordinate (i) ;
\draw [thick] (16,0) coordinate (i) -- (19,0) coordinate (j) ;
\draw [thick] (16,0) coordinate (i) -- (14,-2) coordinate (k) ;

\fill (a) circle (4pt);
\fill (b) circle (4pt);
\fill (c) circle (4pt);
\fill (e) circle (4pt);

\fill (g) circle (4pt);
\fill (h) circle (4pt);
\fill (i) circle (4pt);
\fill (k) circle (4pt);

\draw (2,-2.7) node{$i$};
\draw (2,2.7) node{$j$};
\draw (-0.4,0.7) node{$u$};
\draw (4.2,0.7) node{$v$};

\draw (14,-2.7) node{$i$};
\draw (14,2.7) node{$j$};
\draw (11.6,0.7) node{$u$};
\draw (16.2,0.7) node{$v$};

\draw (2,1) node{$L_t$};
\draw (4.2,-1.2) node{$L_{t+1}$};
\draw (16.6,-1) node{$L_t'$};

\end{tikzpicture}
\caption{}
\end{figure}

If $u,v$ are inner points of $L_t$, then $L_t$ is the first ear containing $u$  and $v$ by Remark \ref{properties}(1).
Hence $\{i,u\}$ and $\{i,v\}$ are ears of $\E$ by Remark \ref{properties}(2). 
Without loss of generality we may assume that $L_{t+1}$ and $L_{t+2}$ are the edges $\{i,u\}$ and $\{i,v\}$. 
Removing $j$ and adding the edges $\{i,u\}$ and $\{i,v\}$ we obtain from $L_t$ a path $L_t'$ passing through $i$ (see Figure 4).
Divide this path into two paths meeting only at $i$. 
If $L_t$ is an even ear, these paths are both odd or even. If $L_t$ is odd, only one of these paths is even. 
Adding these paths to $L_1,...,L_{t-1}$ we obtain a generalized ear decomposition $\F_{t+2}$ of $L_1 + \cdots + L_{t+2}-j$ with $\varphi(\F_{t+2}) \le \varphi(\E_{t+2})+1$. \smallskip

\begin{figure}[ht!]
\begin{tikzpicture}[scale=0.45] 

\draw [thick] (0,0) coordinate (g) -- (-3,0);
\draw [thick] (0,0) coordinate (g) -- (2,2) coordinate (h) ;
\draw [thick] (2,2) coordinate (h) -- (4,0) coordinate (i) ;
\draw [thick] (4,0) coordinate (i) -- (7,0) coordinate (j) ;
\draw [thick, dashed] (4,0) coordinate (i) -- (2,-2) coordinate (k) ;
\draw [thick, dashed] (0,0) coordinate (g) -- (2,-2);

\draw [thick, dashed] (13,0) coordinate (a) -- (15,2) coordinate (b) ;
\draw [thick, dashed] (15,2) coordinate (b) -- (17,0) coordinate (c) ;
\draw [thick] (17,0) coordinate (c) -- (20,0) coordinate (d) ;
\draw [thick] (13,0) coordinate (a) -- (10,0) coordinate (e) ;
\draw [thick] (13,0) coordinate (a) -- (15,-2) coordinate (f) ; 
\draw [thick] (17,0) coordinate (c) -- (15,-2) coordinate (f) ;

\fill (a) circle (4pt);
\fill (b) circle (4pt);
\fill (c) circle (4pt);
\fill (f) circle (4pt);
\fill (g) circle (4pt);
\fill (h) circle (4pt);
\fill (i) circle (4pt);
\fill (k) circle (4pt);

\draw (2,-2.7) node{$i$};
\draw (2,2.8) node{$j$};
\draw (-0.3,0.7) node{$u$};
\draw (4.2,0.7) node{$v$};

\draw (15,-2.7) node{$i$};
\draw (15,2.8) node{$j$};
\draw (12.6,0.7) node{$u$};
\draw (17.2,0.7) node{$v$};

\draw (2,1) node{$L_t$};
\draw (4.2,-1.1) node{$L_{t+2}$};
\draw (0.2,-1.2) node{$L_{t+1}$};
\draw (15,-0.9) node{$L_t'$};

\end{tikzpicture}
\caption{}
\end{figure}

Case 2: {\em $i$ is an inner point of $L_t$}. Then $i$ doesn't belong to any earlier ear than $L_t$.
Without restriction we may assume that $u$ appears before $j$ and $j$ before $i$ in $L_t$.

If $\{i,u\} \in L_t$ and $\{i,v\} \in L_t$, then $L_t$ must be the cycle $\{u,j,v,i,u\}$.
Adding the two 2-cycles $\{u,i\}$ and $\{i,v\}$ to $L_1,...,L_{t-1}$ we obtain a generalized ear decomposition $\F_t$ of $L_1 + \cdots + L_{t}-j$ with $\varphi(\F_{t}) = \varphi(\E_{t})+1$.

If $\{i,u\} \in L_t$ and $\{i,v\} \not\in L_t$, then $L_t$ must be a cycle starting from $u$. 
By Remark \ref{properties}(2), $\{i,v\}$ must be an ear of $\E$ which appears later than $L_t$.
We may assume that $L_{t+1}$ is this ear. Removing $j$ and adding $\{i,v\}$ we obtain from $L_t$ a union of the 2-cycle $\{u,i\}$ and a cycle passing through $v$ with endpoint $i$  (see Figure 5 I). Adding them to $L_1 + \cdots + L_{t-1}$ we obtain a generalized ear decomposition $\F_{t+1}$ of $L_1 + \cdots + L_{t+1}-j$ with $\varphi(\F_{t+1}) \le \varphi(\E_{t+1})+1$.

If $\{i,u\} \not\in L_t$ and $\{i,v\} \in L_t$, then $\{i,u\}$ must be an ear of $\E$ by Remark \ref{properties}(2). We may assume that $L_{t+1}$ is this ear. Removing $j$ and adding $\{i,u\}$ we obtain from $L_t$ a union of a path passing through $u,i$ and the 2-cycle $\{i,v\}$  (see Figure 5 II). Adding them to $L_1 + \cdots + L_{t-1}$ we obtain a generalized ear decomposition $\F_{t+1}$ of $L_1 + \cdots + L_{t+1}-j$ with $\varphi(\F_{t+1}) \le \varphi(\E_{t+1})+1$.

\begin{figure}[ht!]
\begin{tikzpicture}[scale=0.45] 

\draw [thick] (0,0) coordinate (g) -- (2,2) coordinate (h) ;
\draw [thick] (2,2) coordinate (h) -- (4,0) coordinate (i) ;
\draw [thick] (4,0) coordinate (g) -- (6,-2) coordinate (j) ;
\draw [thick] (0,0) coordinate (g) -- (2,-2) coordinate (k) ; 
\draw [thick] (2,-2) coordinate (k) -- (4,-4) coordinate (l) ; 
\draw [thick] (6,-2) coordinate (j) -- (4,-4) coordinate (l) ; 
\draw [thick, dashed] (4,0) coordinate (i) -- (2,-2) coordinate (k) ;

\draw [thick] (12,0) coordinate (a) -- (14,2) coordinate (b) ;
\draw [thick] (14,2) coordinate (b) -- (16,0) coordinate (c) ;
\draw [thick] (16,0) coordinate (c) -- (14,-2) coordinate (d) ;
\draw [thick] (12,0) coordinate (a) -- (9,0) coordinate (e) ;
\draw [thick] (14,-2) coordinate (d) -- (11,-2) coordinate (g) ;
\draw [thick, dashed] (12,0) coordinate (a) -- (14,-2) coordinate (f) ; 

\draw [thick] (24,0) coordinate (a) -- (26,2) coordinate (b) ;
\draw [thick] (26,2) coordinate (b) -- (28,0) coordinate (c) ;
\draw [thick, dashed] (28,0) coordinate (c) -- (26,-2) coordinate (d) ;
\draw [thick] (28,0) coordinate (c) -- (30,-2) coordinate (h) ;
\draw [thick] (30,-2) coordinate (h) -- (28,-4) coordinate (i) ;
\draw [thick] (26,-2) coordinate (d) -- (28,-4) coordinate (i) ;
\draw [thick] (24,0) coordinate (a) -- (21,0) coordinate (e) ;
\draw [thick] (26,-2) coordinate (d) -- (23,-2) coordinate (g) ;
\draw [thick, dashed] (24,0) coordinate (a) -- (26,-2) coordinate (f) ;

\fill (0,0) circle (4pt);
\fill (2,2) circle (4pt);
\fill (4,0) circle (4pt);
\fill (2,-2) circle (4pt);
\fill (12,0) circle (4pt);
\fill (14,2) circle (4pt);
\fill (16,0) circle (4pt);
\fill (14,-2) circle (4pt);
\fill (24,0) circle (4pt);
\fill (26,2) circle (4pt);
\fill (28,0) circle (4pt);
\fill (26,-2) circle (4pt);

\draw (2,-2.7) node{$i$};
\draw (2,2.7) node{$j$};
\draw (-0.3,0.6) node{$u$};
\draw (4.2,0.7) node{$v$};

\draw (14,-2.7) node{$i$};
\draw (14,2.7) node{$j$};
\draw (11.7,0.7) node{$u$};
\draw (16.2,0.7) node{$v$};

\draw (26,-2.7) node{$i$};
\draw (26,2.7) node{$j$};
\draw (23.7,0.7) node{$u$};
\draw (28.2,0.7) node{$v$};

\draw (2,1) node{$L_t$};
\draw (14,1) node{$L_t$};
\draw (26,1) node{$L_t$};
\draw (4.1,-1.3) node{$L_{t+1}$};
\draw (12.2,-1.3) node{$L_{t+1}$};
\draw (24.1,-1.3) node{$L_{t+1}$};
\draw (28.2,-1.3) node{$L_{t+2}$};

\draw (2,-5) node{I};
\draw (14,-5) node{II};
\draw (26,-5) node{III};

\end{tikzpicture}
\caption{}
\end{figure}

If $\{u,i\} \not\in L_t$ and $\{i,v\} \not\in L_t$, then $\{i,u\}$ and $\{i,v\}$ must be ears of $\E$ by Remark \ref{properties}(2). We may assume that $L_{t+1}$ and $L_{t+2}$ are these ears. Removing $j$ and adding the edges $\{i,u\}$, $\{i,v\}$ we obtain from $L_t$ a union of a path passing through $u,i$ and a cycle passing through $v$ with endpoint $i$  (see Figure 5 III). Adding them to $L_1 + \cdots + L_{t-1}$ we obtain a generalized ear decomposition $\F_{t+2}$ of $L_1 + \cdots + L_{t+2}-j$ with $\varphi(\F_{t+2}) \le \varphi(\E_{t+2})+1$.

Let $s > t$ be an index such that $j \in L_s$. Then $j$ is an endpoint of $L_s$ by Remark \ref{properties}(1).
Using induction we may assume that there is a generalized ear decomposition $\F_{s-1}$ of the subgraph $L_1 + \cdots + L_{s-1}-j$ with $\varphi(\F_{s-1}) \le \varphi(\E_{s-1})+1$.
Let $u$ be a vertex of $L_s$ adjacent to $j$. 

If $u$ is an endpoint of $L_s$, then $L_s$ is the edge $\{j,u\}$. 
In this case, $\varphi(\E_{s-1}) = \varphi(\E_s)$ and $\F_{s-1}$ is a generalized ear decomposition of $L_1 + \cdots + L_s-j$. Set $\F_s = \F_{s-1}$. Then $\varphi(\F_s) \le  \varphi(\E_s)+1.$

If $u$ is an inner point of $L_s$, then $u$ does not belong to the previous ears by Remark \ref{properties}(1). 
Hence the edge $\{i,u\}$ belongs a late ear than $L_s$. 
This ear must be the edge $\{i,u\}$ by Remark \ref{properties}(2). 
Without restriction we may assume that this ear is $L_{s+1}$. 
Removing $j$ and adding the edge $\{i,u\}$ to $L_s$ we obtain a new path of the same length as $L_s$.
Adding this path to $\F_{s-1}$ we obtain a generalized ear decomposition $\F_{s+1}$ of $L_1+\cdots+L_{s+1}-j$. The difference $\varphi(\F_{s+1}) - \varphi(\F_{s-1})$ is 0 or 1 if the length of this new path is odd or even. Similarly, the difference
$\varphi(\E_s) - \varphi(\E_{s-1})$ is 0 or 1 if the length of $L_s$ is odd or even. Hence
$\varphi(\F_{s+1}) - \varphi(\F_{s-1}) = \varphi(\E_s) - \varphi(\E_{s-1}).$
Therefore, 
$$\varphi(\F_{s+1}) \le  \varphi(\F_{s-1}) + \varphi(\E_s) - \varphi(\E_{s-1}) 
\le \varphi(\E_s)+1.$$

By this way, we can find a generalized ear decomposition $\F$ of $K-j$ such that $\varphi(\F) \le \varphi(\E)+1$.
\end{proof}

For a connected graph $G$ let $\varphi(G)$ denote the minimum number of even ears in generalized ear decompositions of $G$. This notion has been used to denote the minimum number of even ears in ear decompositions if $G$ is a 2-edge connected graph. The following lemma shows that the two notions are the same in this case.  

\begin{Lemma} \label{phi}
Let $G$ be a 2-edge connected graph. 
Then $\varphi(G)$ is the minimum number of even ear in ear decompositions of $G$. 
\end{Lemma}

\begin{proof}
It suffices to show that if $\E$ is a generalized ear decomposition of $G$, there exists an ear decomposition $\F$ of $G$ such that $\varphi(\F) \le \varphi(\E)$. Note that an ear decomposition of $G$ is a generalized ear decomposition without 2-cycles. Without restriction we may assume that $\E$ has 2-cycles.

Let $ L_1,...,L_r$ be the sequence of ears of $\E$.
Let $L_i$ be the last 2-cycle appearing in $\E$ and $L_i = u,v,u$.
By Remark \ref{properties}(1), $L_i$ is the unique ear containing $v$ as an inner point.
Since $G$ is 2-edge connected, $v$ is not a leaf. 
Hence, there is an other ear of $\E$ containing $v$.
Let $L_j$ be the first ear of $\E$ which contains $v$ as an endpoint. Then $j > i$.
Since the ears $L_{i+1},...,L_{j-1}$ do not contain $v$, we may change the order of $\E$ to 
$$L_1,...,L_{i-1},L_{i+1},...,L_{j-1},L_i,L_j,...,L_r$$ 
and still obtain a generalized ear decomposition of $G$. 
By Remark \ref{properties}, the inner points of $L_j$ are different than $u,v$.
Putting the edge $\{u,v\}$ and $L_j$ together we obtain a path $L_j'$, which is even or odd if $L_j$ is odd or even, respectively. Let $\F$ be the sequence
$$L_1,...,L_{i-1},L_{i+1},...,L_{j-1},L_j',L_{j+1},...,L_r.$$ 
Then $F$ is a generalized ear decomposition of $G$ with $\varphi(F) \le \varphi(G)$.
Moreover, the number of 2-edges of $\E$ is greater than that of $\F$.
If $\F$ has a 2-cycle, we can repeat this replacement again until we obtain a generalized ear decomposition $\F$ without 2-cycles with $\varphi(\F) \le \varphi(\E)$. 
\end{proof}

Applying Lemma \ref{merge}, we obtain the following bound for the order of a matching-covered parallelizations whose base graph is $G$.

\begin{Proposition}  \label{order} 
Let $G^\a$ be a matching-covered graph with $\supp(\a) = V$. Then 
$$|\a| \ge\varphi(G)+n-1.$$
\end{Proposition}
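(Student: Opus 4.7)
The plan is to start from a particularly good ear decomposition of the parallelization $G^\a$ and iteratively degenerate it, via repeated application of Lemma~\ref{merge}, into a generalized ear decomposition of the base graph $G$. Each deletion of a duplicate vertex brings us one step closer to $G$ and costs at most one new even ear, so a straightforward count of the number of deletions produces the desired bound on $\varphi(G)$.

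First, I would invoke the Lov\'asz--Plummer characterization recalled just above this proposition: since $G^\a$ is matching-covered and bipartite, it admits an ear decomposition $\E_0$ with exactly one even ear. In particular $\varphi(\E_0) = 1$, and of course $\E_0$ is also a generalized ear decomposition in the sense of Section~3.

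Next, I would count how many duplicates separate $G^\a$ from $G$. Because $\supp(\a) = V$, every vertex $i$ of $G$ has at least one copy in $G^\a$, and the total number of extra copies equals
$$\sum_{i \in V}(a_i - 1) = |\a| - n.$$
For each vertex $i$ with $a_i \ge 2$, any two duplicates $i_r, i_s$ are, by the definition of the parallelization, nonadjacent and share the same neighborhood, and $G^\a - i_s = G^{\a-\e_i}$ is again a connected parallelization of $G$ (any path through $i_s$ can be rerouted through $i_r$). Hence Lemma~\ref{merge} applies to the pair $(i_r,i_s)$. Iterating this deletion $|\a|-n$ times and applying the lemma at each step, the decomposition $\E_0$ is degenerated into a generalized ear decomposition $\F$ of the base graph $G^{\1} = G$ satisfying
$$\varphi(\F) \le \varphi(\E_0) + (|\a|-n) = |\a|-n+1.$$
By the definition of $\varphi(G)$ as the minimum of $\varphi$ over all generalized ear decompositions of $G$, this yields $\varphi(G) \le |\a|-n+1$, which is exactly the claimed inequality.

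I do not foresee a genuine obstacle: the substantial combinatorial work has already been absorbed into Lemma~\ref{merge}, and what remains is a bookkeeping argument that the iterative deletion of duplicates stays inside the class of connected parallelizations of $G$ until the weight vector reaches $\1$. The only step worth double-checking is that the two duplicates used at each deletion genuinely satisfy the hypotheses of Lemma~\ref{merge}, but this is immediate from the very definition of a parallelization, which forces duplicates of the same original vertex to be non\-adjacent and to share a common neighborhood.
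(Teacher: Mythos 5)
Your proof is correct and takes essentially the same route as the paper's: start from an ear decomposition of $G^\a$ with a single even ear (the Lov\'asz--Plummer characterization of bipartite matching-covered graphs) and apply Lemma~\ref{merge} once for each of the $|\a|-n$ duplicate deletions, yielding a generalized ear decomposition $\F$ of $G$ with $\varphi(G)\le\varphi(\F)\le |\a|-n+1$. The only difference is that you make the iteration and the verification of the hypotheses of Lemma~\ref{merge} explicit, which the paper leaves implicit.
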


\begin{proof} 
By the definition of a parallelization, $G$ can be obtained from $G^\a$ by deleting $a_i-1$ duplications of $i$, $i = 1,...,n$. By Theorem \ref{one even ear}, there is  an ear decomposition $\E$ of $G^\a$ with $\varphi(\E) = 1$.
Applying Lemma \ref{merge}, $\E$ can be deformed to a generalized ear decomposition $\F$ of $G$ with
$$\varphi(\F) \le \varphi(\E) + \sum_{i=1}^n (a_i -1) = |\a| -n + 1.$$
Therefore,
$$|\a| \ge\varphi(\F)+n-1 \ge\varphi(G)+n-1.$$
\end{proof}

On the other hand, we can always construct a matching-covered parallelization $G^\a$ with $\supp(\a) = V$ and $|\a| = \varphi(G) + n-1.$ 

\begin{Proposition} \label{construction}
Let $G$ be a connected bipartite graph. Let $\E$ be a generalized ear decomposition of $G$. 
Let $\a \in \NN^n$ such that $a_i -1$ is the number of even ears of $\E$ which is different from the first ear and which starts with the vertex $i$, $i = 1,...,n$.
Then $G^\a$ is a matching-covered graph with
$$|\a| = \varphi(\E) + n-1.$$
\end{Proposition}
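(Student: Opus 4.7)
The plan is to handle the two assertions separately: the counting identity $|\a|=\varphi(\E)+n-1$ and matching-coveredness of $G^\a$. The count is immediate from the recipe: $|\a|=n+\sum_{i=1}^n(a_i-1)$, and $\sum_i(a_i-1)$ equals the number of even ears of $\E$ different from $L_1$. Since $G$ is bipartite every cycle of $G$ has even length, so $L_1$ (whether a proper cycle or a $2$-cycle) is itself even, and hence $\sum_i(a_i-1)=\varphi(\E)-1$.

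For matching-coveredness I would construct an ear decomposition of $G^\a$ whose only even ear is the first and then invoke the Lov\'asz--Plummer ear-decomposition criterion cited at the start of the section. Processing $\E$ ear by ear: each odd ear of $\E$ is lifted to an odd first-copy path in $G^\a$; each even ear $L_j$ with $j\ge 2$, starting at $v_j$ and associated to the fresh copy $(v_j)_{s+1}$ (with $s$ counting the preceding even ears based at $v_j$), is replaced by an odd ``return ear'' $(v_j)_1,(w_1)_1,(v_j)_{s+1},(n^*)_1$ of length three---where $w_1$ is the first inner vertex of $L_j$ and $n^*$ is any neighbour of $v_j$ already lying in an earlier ear---together with, when $\ell_j\ge 4$, an odd ``continuation'' of length $\ell_j-1$ running along the rest of $L_j$ on first copies. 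The first ear $\mathcal{L}_1$ is taken to be a proper even cycle of $G^\a$: the first-copy lift of $L_1$ when $L_1$ is already a proper cycle, or else a $4$-cycle obtained by merging $L_1$ with a suitable subsequent ear at an endpoint of $L_1$. The leftover edges of $G^\a$---extra copies of existing edges created by duplicated endpoints---then have both endpoints already in place and are appended as length-one odd ears.

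The main obstacle is to verify that the return ear construction is always valid, i.e.\ that the required neighbour $n^*$ of $v_j$ in an earlier ear exists. This holds because $v_j$ itself is an endpoint of $L_j$ and therefore already appears in an earlier ear, where it has at least one neighbour (necessarily distinct from the inner vertex $w_1$, which is new). A secondary delicate point is the $2$-cycle case for $L_1$: one must exhibit a merging partner to make $\mathcal{L}_1$ a proper cycle. If $\E$ has at least two ears, then the second ear $L_2$ attaches at $V(L_1)=\{v_0,v_1\}$ and so is either an odd path between $v_0$ and $v_1$---whose union with $L_1$ lifts to a proper even cycle in $G^\a$---or a $2$-cycle at $v_0$ or $v_1$---whose associated extra copy closes up a $4$-cycle in $G^\a$. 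The degenerate case $\E=(L_1)$ gives $G^\a=K_2$, which is trivially matching-covered.
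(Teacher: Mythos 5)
Your strategy is essentially the paper's: both arguments establish matching-coveredness by exhibiting an ear decomposition of $G^\a$ with exactly one even ear and invoking the Lov\'asz--Plummer criterion quoted at the beginning of Section 3, and your ``return ear'' through the fresh copy of $v_j$ towards a neighbour $n^*$ of $v_j$ lying in an earlier ear is precisely the local surgery of the paper's proof (Figure 4). The only organizational difference is that the paper performs this surgery one duplicated vertex at a time, by induction on $|\a|-n$, whereas you carry it out in a single pass over $\E$; your counting argument for $|\a|=\varphi(\E)+n-1$ (using that the first ear of a bipartite graph is automatically even) and your justification of the existence of $n^*$ are fine.

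There is, however, a concrete hole in your treatment of the case where $L_1$ is a 2-cycle. In the paper's notion of (generalized) ear decomposition an ear $L_i$ with $i\ge 2$ may be a closed path, i.e.\ a cycle meeting the earlier union in a single vertex; this is forced already for ordinary ear decompositions, since a 2-edge connected graph that is not 2-connected (two even cycles sharing one vertex) admits no decomposition with only open ears, consistent with the citation of Robbins. Consequently, when $L_1$ is the 2-cycle on $\{v_0,v_1\}$, the second ear need not be ``an odd path between $v_0$ and $v_1$ or a 2-cycle at $v_0$ or $v_1$'': it can be a proper even cycle attached at $v_0$ (or $v_1$) alone --- for instance $G$ a 4-cycle with a pendant edge at $v_0$, with $\E$ consisting of the pendant 2-cycle followed by the 4-cycle as a closed ear --- and then neither of your merging recipes produces the first ear $\mathcal{L}_1$. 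The gap is local and fixable by the device you already use: such an $L_2$ is an even ear based at $v_0$, hence owns a fresh copy of $v_0$, and the 4-cycle through $(v_0)_1$, $(v_1)_1$, that fresh copy and the first inner vertex of $L_2$ can serve as $\mathcal{L}_1$, the rest of $L_2$ being lifted as an odd continuation. (Two smaller blemishes: your index $(v_j)_{s+1}$ for the fresh copy collides with the base copy when $s=0$ and should be shifted by one; and for an even open ear of length 2 the second edge is a first-copy edge rather than an ``extra copy'', though appending it as a length-one ear is still legitimate because both of its endpoints are already placed.)
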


\begin{proof} 
By the definition of $\a$, we have $|\a| = \varphi(\E) + n-1$.
If $\a = (1,...,1)$, then $\varphi(\E) = 1$. Hence $G^\a = G$ is matching-covered by Theorem \ref{one even ear}.  

If $\a \neq (1,...,1)$, we may assume that $a_n > 1$.
Let  $L_1,...,L_r$ be ears of $\E$. 
Let $t$ be the least index $> 1$ such that $L_t$ is an even ear starting with the vertex $n$.

Let $K$ be the graph obtained from $G$ by adding a duplication $n+1$ of $n$ to $G$.
This means $n$ and $n+1$ are not incident and have the same neighborhood.
Let $u$ be the vertex adjacent to $n$ in $L_t$.
Let $v$ be a vertex of the subgraph $L_1+\cdots+L_{t-1}$ adjacent to $n$. 
Then $\{u,n+1\}$ and $\{v,n+1\}$ are edges of $K$.
Let $H$ be graph obtained from $G$ by adding these edges.
Let $L_t'$ be the odd ear obtained from $L_t$ by replacing the edge $\{u,n\}$ by the path $u,n+1,v$.
It is easy to check that $L_1,...,L_{t-1},L_t', \{u,n\}, L_{t+1},...,L_r$ is a generalized ear decomposition $\E'$ of $H$ with $\varphi(\E') = \varphi(\E)-1$ (see Figure 6).

\begin{figure}[ht!]
\begin{tikzpicture}[scale=0.5] 

\draw [thick] (0,0) coordinate (a) -- (2,2) coordinate (b) ;
\draw [thick, dashed] (0,0) coordinate (a) -- (2,-2) coordinate (c) ;
\draw [thick] (4,0) coordinate (d) -- (2,2) coordinate (b) ;
\draw [thick] (4,0) coordinate (d) -- (2,-2) coordinate (c) ;
\draw [thick] (4,0) coordinate (d) -- (10,0) coordinate (e) ; 
\draw [thick] (-3,0) coordinate (f) -- (2,2) coordinate (b) ;
\draw [thick, dashed] (-3,0) coordinate (f) -- (2,-2) coordinate (c) ; 

\fill (a) circle (4pt);
\fill (b) circle (4pt);
\fill (c) circle (4pt);
\fill (d) circle (4pt);
\fill (f) circle (4pt);

\draw (2,2.7) node{$n+1$};
\draw (2,-2.7) node{$n$};
\draw (-3.2,0.7) node{$v$};
\draw (1,0) node{$w$};
\draw (4.2,0.7) node{$u$};
\draw (7.2,0.7) node{$L_t$};
\draw (3.5,-1.4) node{$L_t$};
 
\end{tikzpicture}
\caption{}
\end{figure}

The edges of $K \setminus H$ are of the form $\{w,n+1\}$, $w \in N(n)$, $w \neq u,v$. 
Adding these edges to the end of $\E'$ we obtain a generalized ear decomposition $\F$ of $K$ with 
$$\varphi(\F) = \varphi(\E') = \varphi(\E)-1.$$
 
Let $\b \in \NN^{n+1}$ such that $b_i -1$ is the number of even ears of $\E'$ which is different from the first ear and which starts with the vertex $i$, $i = 1,...,n+1$. Then $b_i = a_i$ for $i = 1,...,n-1$, $b_n = a_n-1$, $b_{n+1} = 1$.
Hence $|\b| = |\a|$.
By induction on $|\a|-n$, we may assume that $K^\b$ is a matching-covered graph with 
$|\b| = \varphi(\F) + n.$
From this it follows that 
$$|\a| = |\b| = \varphi(\F) + n = \varphi(\E) + n-1.$$
\end{proof}

\begin{Remark}
Proposition \ref{construction} provides a way to solve the diophantine system $(*)$. 
Let $U$ be an arbitrary dominating set $U$ of $G$ such that $G_U$ is a connected bipartite graph. Let $\a \in \NN^n$ such that $a_i = 0$ for $i \not\in U$ and $a_i -1$ is the number of even ears which starts with the vertex $i$ for $i \in U$.
By Proposition \ref{construction}, $G^\a$ is a matching-covered graph. Hence, $\a$ is a solution of $(*)$ by Theorem \ref{matching-covered}. We don't know whether all solutions $\a \neq 0$ of $(*)$ arises in this way.
\end{Remark}

The above propositions yield the following explicit formula for the minimal order of matching-covered parallelizations whose base graph is $G$.

\begin{Theorem} \label{min}
Let $G$ be a connected bipartite graph. Then 
$$\min\{|\a||\ G^\a \text{ is matching-covered with } \supp(\a) = V\} = \varphi(G)+n-1.$$
\end{Theorem}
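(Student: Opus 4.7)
The plan is to prove Theorem \ref{min} by combining the two preceding propositions, which already do essentially all the work. Proposition \ref{order} gives the inequality $|\a| \ge \varphi(G) + n - 1$ for any matching-covered parallelization $G^\a$ with $\supp(\a) = V$, so the $\ge$ direction of the desired equality is immediate. It remains to exhibit some $\a \in \NN^n$ with $\supp(\a) = V$ such that $G^\a$ is matching-covered and $|\a| = \varphi(G) + n - 1$.

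For the $\le$ direction, I would choose a generalized ear decomposition $\E$ of $G$ achieving the minimum, so that $\varphi(\E) = \varphi(G)$; such an $\E$ exists by Remark \ref{existence}, which guarantees that generalized ear decompositions of $G$ exist at all, and then by taking one that minimizes the number of even ears. Now apply Proposition \ref{construction} to $\E$: this produces a vector $\a \in \NN^n$, defined by letting $a_i - 1$ be the number of even ears of $\E$ other than the first ear that start at vertex $i$, and yields a matching-covered parallelization $G^\a$ with
$$|\a| = \varphi(\E) + n - 1 = \varphi(G) + n - 1.$$

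The one point that still needs a brief verification is that this $\a$ satisfies $\supp(\a) = V$, which is required in the statement of Theorem \ref{min}. But this is built into the construction: the definition $a_i = 1 + (\text{number of even ears } \ne L_1 \text{ starting at } i)$ forces $a_i \ge 1$ for every vertex $i \in V$, so indeed $\supp(\a) = V$. Combining this with the lower bound from Proposition \ref{order} completes the proof.

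There is no real obstacle here; the theorem is essentially a packaging of Propositions \ref{order} and \ref{construction}, together with the simple observation that the construction in the latter always produces a vector with full support. The genuine work was done in those propositions, in particular in Lemma \ref{merge} (for the lower bound) and in the inductive duplication argument (for the upper bound).
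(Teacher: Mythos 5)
Your proof is correct and follows exactly the paper's own argument: Proposition \ref{order} gives the lower bound, and applying Proposition \ref{construction} to a generalized ear decomposition $\E$ with $\varphi(\E)=\varphi(G)$ gives the matching upper bound. Your extra remark that $a_i \ge 1$ for all $i$, so that $\supp(\a)=V$, is a small point the paper leaves implicit, and it is verified correctly.
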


\begin{proof}
By Proposition  \ref{order}, we have
$$\min\{|\a||\ G^\a \text{ is matching-covered with } \supp(\a) = V\} \ge  \varphi(G)+n-1.$$
To prove the converse inequality we choose a generalized ear decomposition $\E$ of $G$ with $\varphi(\E) = \varphi(G)$. Using Proposition \ref{construction} we can construct a matching-covered parallelization $G^\a$ with 
$\supp(\a) = V$ and $|\a| = \varphi(G)+n-1$. Therefore,
$$\min\{|\a||\ G^\a \text{ is matching-covered with } \supp(\a) = V\} \le \varphi(G)+n-1.$$
\end{proof}

\begin{Remark} 
Since $|\a|$ is the number of vertices of $G^\a$, $|\a|$ is an even number if $G^\a$ is matching-covered.
Therefore, $\varphi(G)+n-1$ is also an even number by Theorem \ref{min}.
\end{Remark}


\section{The main result}

Theorem \ref{min} leads us to the following invariant. For any connected graph $G$ on $n$ vertices we set 
$$\mu(G) := (\varphi(G) + n - 1)/2.$$
It follows that $\mu(G) \ge n/2$.

Let $G$ be a connected bipartite graph.
If $G$ is not a complete bipartite graph, we define
$$s(G) := \min\{\mu(G_U)|\ G_U \text{ is a dominating connected induced subgraph of $G$}\}.$$ 
If $G$ is a complete bipartite graph, we set $s(G) = 0$.

\begin{Theorem} \label{main}
Let $I$ be the edge ideal of a connected bipartite graph $G$. Then  
$$\dstab(I) = s(G)+1.$$
\end{Theorem}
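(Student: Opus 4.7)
The plan is to combine Theorem \ref{system}, which rewrites $\dstab(I)$ as an integer-programming minimum over the solutions of $(*)$, with the chain of characterizations built in Sections 2 and 3. Explicitly, by Theorem \ref{system} it suffices to prove
\[
\min\{|\a|/2 \mid \a \in \NN^n \text{ is a solution of } (*)\} = s(G),
\]
and then both sides plus $1$ give $\dstab(I) = s(G)+1$.

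First I would dispose of the complete bipartite case. If $G$ is complete bipartite, Lemma \ref{a=0} produces the solution $\a = 0$ of $(*)$, so the minimum on the left is $0$, which matches the convention $s(G) = 0$. From here on, assume $G$ is not complete bipartite, so by Lemma \ref{a=0} every solution of $(*)$ satisfies $\a \neq 0$.

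For the non-trivial case I would argue by two inequalities, pivoting through Theorem \ref{matching-covered} which says that $\a \neq 0$ solves $(*)$ exactly when $U := \supp(\a)$ is a dominating set and $G^\a$ is matching-covered. For the lower bound, take any such $\a$ and let $U = \supp(\a)$. By Corollary \ref{base}, $G_U$ is a dominating connected bipartite induced subgraph. Since $G^\a$ is a matching-covered parallelization of $G_U$ with full support on $U$, Theorem \ref{min} applied to $G_U$ yields $|\a| \geq \varphi(G_U) + |U| - 1 = 2\mu(G_U) \geq 2 s(G)$, whence $|\a|/2 \geq s(G)$. For the upper bound, choose a dominating connected induced subgraph $G_U$ achieving $s(G) = \mu(G_U)$. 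Pick a generalized ear decomposition $\E$ of $G_U$ with $\varphi(\E) = \varphi(G_U)$ and apply Proposition \ref{construction} (to the connected bipartite graph $G_U$) to produce $\a' \in \NN^{|U|}$ such that $G_U^{\,\a'}$ is matching-covered with $\supp(\a') = U$ and $|\a'| = \varphi(G_U) + |U|-1 = 2\mu(G_U)$. Extending $\a'$ by zeros to a vector $\a \in \NN^n$ makes $G^\a = G_U^{\,\a'}$ a matching-covered parallelization of $G$ whose support $U$ is dominating, so Theorem \ref{matching-covered} shows $\a$ solves $(*)$ with $|\a|/2 = \mu(G_U) = s(G)$. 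Combining the two inequalities closes the loop.

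The one step that needs attention is matching the vertex-set conventions in the lower-bound argument: Theorem \ref{min} is stated for matching-covered parallelizations with $\supp(\a)$ equal to the full vertex set, so I apply it to $G_U$ rather than to $G$, viewing $\a$ as an element of $\NN^{|U|}$ supported everywhere on $U$. This is legitimate because $G^\a$ depends only on the restriction of $\a$ to $U = \supp(\a)$, and $G^\a$ being matching-covered is exactly the condition needed in Theorem \ref{min} for $G_U$. No further obstacle arises: all the heavy lifting—the translation to $(*)$, the graph-parallelization characterization of its solutions, and the ear-decomposition formula for minimal orders—has already been carried out in Sections 1–3.
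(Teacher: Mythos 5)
Your proposal is correct and follows essentially the same route as the paper's proof: reduce via Theorem \ref{system}, handle the complete bipartite case with Lemma \ref{a=0}, and otherwise combine Theorem \ref{matching-covered}, Corollary \ref{base}, and Theorem \ref{min} applied to $G_U$. The only difference is that you unpack Theorem \ref{min} into its two halves (Proposition \ref{order} for the lower bound, Proposition \ref{construction} for the upper bound), which the paper's shorter proof does implicitly.
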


\begin{proof} 
By Theorem \ref{system}, we have
$$\dstab(I) = \min\{|\a|/2|\  \a \in \NN^n \text{ is a solution of } (*)\}+1.$$
By Lemma \ref{a=0}, $\a = 0$ is a solution of the system $(*)$ in Section 1 if and only if $G$ is a complete bipartite graph.
Therefore, $\dstab(I) = 1 = s(G)+1$ in this case.
 
If $G$ is not a complete bipartite graph, then 
\begin{multline*}
\{\a \in \NN^n|\ \a \text{ is a solution of } (*)\} =\\
\{\a \in \NN^n|\ \text{$\supp(\a)$ is a dominating set of $G$ and $G^\a$ is matching-covered}\}.
\end{multline*}
by Theorem \ref{matching-covered}.
Let $U = \supp(\a)$ for such an $\a$. 
Then $G_U$ is a dominating connected bipartite subgraph of $G$ by Corollary \ref{base}.
Applying Theorem \ref{min} to $G_U$, we have
$$\min\{|\a||\ G^\a \text{ is matching-covered with } \supp(\a) = U\} = \varphi(G_U) + |U| - 1.$$ 
Since $\mu(G_U) = (\varphi(G) + |U| - 1)/2,$ this implies
$$\min\{|\a|/2|\  \a \in \NN^n \text{ is a solution of } (*)\} = s(G).$$
Therefore, $\dstab(I) = s(G)+1$.
\end{proof}



By Theorem \ref{main}, to compute $\dstab(I)$ we only need to compute $s(G)$.
As a first application we give a complete description of connected bipartite graphs with $\dstab(I) = 1,2,3,4$. 
Recall that a graph is spanned by a subgraph if they have the same vertex set.

\begin{Corollary} \label{small}
Let $G$ be a connected bipartite graph $G$. Then \par
{\rm (1)}  $\dstab(I) = 1$ if and only if $G$ is a complete bipartite graph, \par
{\rm (2)} $\dstab(I) = 2$ if and only if $G$ is not a complete bipartite graph and $G$ has a dominating set that is an edge, \par
{\rm (3)} $\dstab(I) = 3$ if and only if $G$ is not a complete bipartite graph, $G$ has no dominating set that is an edge, and $G$ has a dominating set whose induced subgraph is a 2-path or a 4-cycle. \par
{\rm (4)} $\dstab(I) = 4$ if and only if $G$ is not a complete bipartite graph, $G$ has no dominating set as in {\rm (2)} and {\rm (3)} and $G$ has a dominating set which is a 3-path or spanned by the union of a 4-cycle and a leaf edge or spanned by a 6-cycle.
\end{Corollary}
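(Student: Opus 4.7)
\medskip
\noindent\textbf{Proof proposal.}

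My plan is to derive each part directly from Theorem \ref{main}. Since $\dstab(I) = s(G) + 1$, the four cases correspond to $s(G) \in \{0, 1, 2, 3\}$, and $s(G) = 0$ is by definition equivalent to $G$ being complete bipartite, yielding (1). For (2), (3), (4), I would use the formula $s(G) = \min\{\mu(G_U)\}$ taken over dominating connected induced subgraphs $G_U$, reducing the task to classifying connected bipartite graphs $H$ with $\mu(H) \in \{1, 2, 3\}$.

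Writing $m = |V(H)|$ and using $\mu(H) = (\varphi(H) + m - 1)/2$, I would first observe $\varphi(H) \ge 1$ for any connected bipartite $H$ with at least one edge: the first ear of any generalized ear decomposition is a cycle, and every cycle of a bipartite graph---including a 2-cycle of length $2$---is even. Hence $\mu(H) \ge m/2 \ge 1$. Then $\mu(H) = 1$ forces $(m, \varphi(H)) = (2, 1)$, so $H$ is a single edge, which feeds directly into (2). Next, $\mu(H) = 2$ forces $(m, \varphi(H)) \in \{(3, 2), (4, 1)\}$, giving either $H = P_3$ or $H$ a matching-covered bipartite graph on four vertices, which by Theorem \ref{LP} must be $C_4 = K_{2,2}$; this gives (3). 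Finally, $\mu(H) = 3$ forces $(m, \varphi(H)) \in \{(4, 3), (5, 2), (6, 1)\}$, giving $H$ a tree on four vertices (in particular the 3-path $P_4$), a five-vertex graph spanned by a $4$-cycle together with a pendant leaf edge (namely $C_4$ plus pendant or $K_{2,3}$), or a matching-covered bipartite graph on six vertices; these match the induced-subgraph types appearing in (4). Each of (2)--(4) then follows by recording the absence of dominating subgraphs realizing $\mu < k - 1$ together with the existence of one realizing $\mu = k - 1$.

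The main obstacle is the $m = 6$ subcase of the $\mu = 3$ classification, where I must show that every matching-covered bipartite graph on six vertices contains a spanning $6$-cycle. I plan to handle this by analyzing the ear decomposition from Theorem \ref{LP} with exactly one even ear. Since the existence of a perfect matching forces bipartition $(3, 3)$, the unique even ear is an even cycle of length $4$ or $6$ and the remaining ears are odd paths of length $1$ or $3$ attached to earlier vertices at both endpoints. A short case analysis on the length of the initial even ear and on how the odd ears extend it then produces a Hamilton $6$-cycle in each configuration, completing the translation of the classification into the dominating-set conditions of the corollary.
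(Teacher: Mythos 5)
Your overall route is the same as the paper's: reduce via Theorem \ref{main} to classifying dominating connected induced subgraphs $G_U$ with $\mu(G_U)\in\{0,1,2,3\}$ through the pairs $(|U|,\varphi(G_U))$, and your handling of the six-vertex subcase (every matching-covered bipartite graph on six vertices is spanned by a $6$-cycle, via the one-even-ear decomposition) is in fact more detailed than the paper, which leaves all these verifications to the reader. Parts (1)--(3) of your argument go through.

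There is, however, a genuine gap in the $(|U|,\varphi)=(4,3)$ subcase of (4). A connected bipartite graph on four vertices with $\varphi=3$ is a tree on four vertices, hence either the $3$-path $P_4$ or the star $K_{1,3}$; your parenthetical ``(in particular the 3-path $P_4$)'' silently discards the star, and the ensuing claim that the types obtained ``match the induced-subgraph types appearing in (4)'' is false for $K_{1,3}$. This cannot be repaired by the routine check you defer to: a dominating induced star with $\mu=3$ can be the \emph{unique} minimizer while none of the dominating sets listed in (4) exist. Take $T$ to be the tree obtained by subdividing every edge of $K_{1,3}$ (center $c$ adjacent to $l_1,l_2,l_3$, and a pendant vertex $w_i$ attached to each $l_i$). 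Any dominating set of $T$ inducing a connected subgraph must contain $l_i$ (since $w_i$'s only neighbor is $l_i$) for every $i$, and then $c$ by connectedness, so the unique minimal such set is $\{c,l_1,l_2,l_3\}$, which induces $K_{1,3}$; hence $s(T)=3$ and $\dstab(I)=4$ (consistent with Corollary \ref{tree}: $v-v_1=7-3=4$), yet $T$ has no cycles and no dominating vertex set inducing a $3$-path. So the forward implication of (4) does not follow from your argument as written; you would either have to add the star to the list of admissible configurations or supply an argument excluding it, and you do neither. (For what it is worth, this is exactly the point where the paper's own proof is also unsupported: it simply asserts ``if $|U|=4$ then $G_U$ must be a 3-path'' and leaves the details to the reader, so your gap coincides with a gap in the paper rather than being introduced by your different bookkeeping.)
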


\begin{proof} 
We need to characterize the graphs $G$ with $s(G) = 0,1,2,3$.
By definition, $s(G) = 0$ means $G$ is a complete bipartite graph.
Assume that $G$ is not a complete bipartite graph. 

We have $s(G) = 1$ if and only if there is a dominating connected bipartite induced subgraph $G_U$ with $\mu(G_U) = 1$. It is clear that $\mu(G_U) = 1$ if and only if $|U| = 2$, which means that $G_U$ is an edge.

We have $s(G) = 2$ if and only if $G$ has no dominating set that is an edge and there is a dominating connected bipartite induced subgraphs $G_U$ with $\mu(G_U) = 2$. From this it follows that $3 \le |U| \le 4$.
If $|U| = 3$, then $G_U$ must be a 2-path, hence $\mu(G_U) = 2$. 
If $|U| = 4$, then $\mu(G_U) = 2$ if and only if $\varphi(G_U) = 1$, which means that $G$ is a 4-cycle.

We have $s(G) = 3$ if and only if $G$ has no dominating set as in (2) and (3) and there is a dominating connected bipartite induced subgraphs $G_U$ with $\mu(G_U) = 3$. From this it follows that $4 \le |U| \le 6$.
If $|U| = 4$, then $G_U$ must be a 3-path, hence $\mu(G_U) = 3$. 
If $|U| = 5$, then $\mu(G_U) = 3$ if and only if $\varphi(G_U) = 2$, which means that $G$ is spanned by the union of a 4-cycle and a leaf edge.
If $|U| = 6$, then $\mu(G_U) = 3$  if and only if $\varphi(G_U) = 1$, which means that $G$ is spanned by a 6-cycle.
We leave the reader to check the details. 
\end{proof}

\begin{Remark}
If $G$ is a complete bipartite graph $K_{r,s}$, $r, s \ge2$, then $G$ always has a minimal dominating set that is an edge. This explains why we need the condition $G$ is not a complete bipartite graph in (2).
\end{Remark}

Another immediate consequence of Theorem \ref{main} is the following formula of \cite{TNT}. Let $v(G)$ and $\varepsilon_0(G)$ denote the number of the vertices and leaf edges of $G$, respectively.

\begin{Corollary} \label{tree} \cite[Lemma 5.4]{TNT}
Let $G$ be a tree. Then 
$$\dstab(I) = v(G) - \varepsilon_0(G).$$
\end{Corollary}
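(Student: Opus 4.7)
The plan is to apply Theorem \ref{main}, which reduces the corollary to showing $s(G) = v(G) - v_1(G) - 1$ (with the convention that stars $K_{1,s}$, $s\ge 2$, are complete bipartite, so $s(G)=0$ matches $v-v_1 = 1$; the remaining case is when $G$ is a tree that is not a star, which in particular has $v(G)\ge 4$).

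First I would compute $\mu(T')$ for a subtree $T'$ of $G$ on $k\ge 2$ vertices. I claim every generalized ear decomposition of $T'$ consists entirely of $2$-cycles, hence has exactly $k-1$ ears (one per edge), all even, so $\varphi(T') = k-1$ and $\mu(T') = k-1$. Indeed, the first ear must be a cycle, and the only cycles in a tree are $2$-cycles in the generalized sense; any subsequent ear that is a genuine path of length $\ge 1$ (i.e., has two distinct endpoints both in the earlier union) would, together with the unique path in $T'$ between those endpoints, produce a cycle of length $\ge 3$ in $T'$, which is impossible. So every ear after the first must also be a $2$-cycle.

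Next I would identify the smallest dominating connected induced subgraph of $G$. A connected induced subgraph of a tree is a subtree, so by the previous paragraph $s(G) = \min\{|U|-1 \mid G_U \text{ is a dominating subtree of } G\}$. The subtree induced on the set of non-leaves is dominating (since $G$ is not a star with $\ge 4$ vertices means no two leaves are adjacent, so every leaf's unique neighbor is a non-leaf) and connected (any internal vertex of a path between two non-leaves has degree $\ge 2$, hence is itself a non-leaf), and has $v(G)-v_1(G)$ vertices, giving the upper bound $s(G) \le v(G)-v_1(G)-1$.

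The matching lower bound is the main point to verify: any dominating $U$ with $G_U$ connected must contain every non-leaf. Suppose $u$ is a non-leaf with $u \notin U$; removing $u$ from $G$ disconnects the tree into $d = \deg(u) \ge 2$ non-empty subtrees $T_1,\dots,T_d$. Since every edge of $G$ between distinct $T_i$'s would have to pass through $u$, no such edge exists, so $G_U$ is connected forces $U \subseteq T_i$ for a single index $i$. But then any vertex $w \in T_j$ with $j\ne i$ has all its neighbors in $T_j\cup\{u\}$, hence none in $U$, so $w$ is neither in $U$ nor dominated by $U$, a contradiction. Therefore $|U|\ge v(G)-v_1(G)$, so $s(G) = v(G)-v_1(G)-1$ and $\dstab(I) = s(G)+1 = v(G)-v_1(G)$. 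The only subtle step is this lower bound argument; everything else is a direct specialization of the general machinery.
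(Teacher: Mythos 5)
Your proof is correct and takes essentially the same route as the paper: apply Theorem \ref{main}, observe that every generalized ear decomposition of a subtree consists only of 2-cycles so that $\mu(G_U)=|U|-1$, and identify the set of non-leaves as the smallest dominating connected induced subtree. The only difference is that you spell out the lower bound (every dominating connected $U$ must contain all vertices of degree $\ge 2$), a step the paper asserts without detailed proof.
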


\begin{proof} 
If $v(G) = 2$, then $\dstab(I) = 1 = v(G) - \varepsilon_0(G).$
If $v(G) > 2$, we consider an arbitrary dominating connected subgraph $G_U$. 
Since $G_U$ has no cycles, every generalized ear decomposition of $G_U$ consists of only 2-cycles.
Hence, $\varphi(G_U) = |U|-1$, which implies $\mu(G_U) = |U|-1$. 
Since $v(G) > 2$, 
the set of the vertices of degree $\ge 2$ is minimal among such $U$. Its size is $v(G)-\varepsilon_0(G)$. 
Therefore, 
$$s(G) = \min \mu(G_U) = v(G)-\varepsilon_0(G)-1.$$
\end{proof}

Now we are going to deduce some upper bounds for $\dstab(I)$ in terms of simple invariants of $G$.
For that we need the following observations. 

\begin{Lemma} \label{spanning} 
Let $G$ be a connected bipartite graph. Let $H$ be a connected spanning subgraph of $G$.
Then $\mu(G) \le \mu(H)$ and $s(G) \le s(H).$
\end{Lemma}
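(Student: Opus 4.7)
The plan is to reduce both statements to the single inequality $\varphi(G) \le \varphi(H)$, since $H$ being spanning gives $v(G) = v(H) = n$ and hence, via the formula $\mu = (\varphi + n - 1)/2$, the bound $\mu(G) \le \mu(H)$ follows at once.

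To prove $\varphi(G) \le \varphi(H)$, I would take a generalized ear decomposition $L_1 + \cdots + L_r$ of $H$ realizing $\varphi(H)$ and append the edges of $E(G) \setminus E(H)$, one at a time, each as a new ear of length $1$. Because $H$ is spanning, every such edge $\{u,v\}$ already has both endpoints in $V(H)$, hence in $L_1 + \cdots + L_r$, and a path of length $1$ has no inner points, so the requirement that only the endpoints of an ear belong to earlier paths is automatically met. Every appended ear is odd, so the count of even ears is unchanged, giving $\varphi(G) \le \varphi(H)$ and therefore $\mu(G) \le \mu(H)$.

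For $s(G) \le s(H)$, I would first dispose of the complete-bipartite corner cases. If $G$ is complete bipartite then $s(G) = 0 \le s(H)$ by definition. If $H$ is complete bipartite, then the bipartition of $G$ restricts to a valid $2$-coloring of the connected graph $H$, which must coincide with the unique bipartition of $H$; consequently $E(H)$ already contains every cross-edge of $G$, forcing $H = G$, and the inequality is trivial. In the remaining case, both $G$ and $H$ are non-complete-bipartite, so I may pick $U \subseteq V$ realizing $s(H) = \mu(H_U)$, with $H_U$ a dominating connected induced subgraph of $H$. Since adjacencies in $H$ are adjacencies in $G$, the set $U$ still dominates $G$, and $G_U$ is connected because its spanning subgraph $H_U$ is. Hence $G_U$ qualifies for the definition of $s(G)$, and applying the first part to the spanning pair $H_U \subseteq G_U$ on vertex set $U$ yields
$$s(G) \;\le\; \mu(G_U) \;\le\; \mu(H_U) \;=\; s(H).$$

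The substantive step is the ear-appending argument; the rest is bookkeeping. I do not anticipate any genuine obstacle, because length-$1$ ears are essentially \emph{free}: they can be attached to any previously constructed subgraph that already contains their endpoints, so the inductive nature of generalized ear decompositions cooperates perfectly with the spanning hypothesis.
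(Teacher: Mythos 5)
Your proof is correct and follows essentially the same route as the paper: append the edges of $G-H$ as odd length-one ears to a minimizing generalized ear decomposition of $H$ to get $\varphi(G)\le\varphi(H)$ (hence $\mu(G)\le\mu(H)$, since $H$ is spanning), and then transfer a minimizing dominating set $U$ for $H$ to $G$ and apply the first inequality to the spanning pair $H_U\subseteq G_U$. Your explicit treatment of the complete-bipartite corner cases is a minor refinement that the paper's proof leaves implicit.
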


\begin{proof} 
Let $\F$ be a generalized ear decomposition of $H$ with $\varphi(\F) = \varphi(H)$.
Since the edges of $G - H$ have both ends in $H$, adding them as paths to $\F$ we obtain a generalized ear decomposition $\E$ of $G$ with $\varphi(\E) = \varphi(\F)$. Therefore, 
$$\mu(G) \le (\varphi(\E)+v(G)-1)/2 = (\varphi(\F)+v(G)-1)/2 = \mu(H).$$

Let $H_U$ be a dominating connected bipartite subgraph of $H$ such that $s(H) = \mu(H_U)$.
Since $G_U$ is also a dominating connected bipartite subgraph of $G$, $s(G) \le \mu(G_U)$.
Since $H_U$ is a connected spanning subgraph of $G_U$, $\mu(G_U) \le \mu(H_U).$ Therefore, $s(G) \le s(H)$.
\end{proof}

\begin{Lemma} \label{mu} 
Let $G$ be a connected graph. Then $\mu(G) \le v(G)-1.$
Equality holds if and only if $G$ is a tree.
\end{Lemma}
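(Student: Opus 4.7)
The plan is to reformulate $\mu(G) \le v(G)-1$ as the equivalent statement $\varphi(G) \le v(G)-1$, which is immediate from the definition $\mu(G) = (\varphi(G) + v(G)-1)/2$, and then to exhibit explicit generalized ear decompositions in two cases.

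First I would treat the case where $G$ contains a cycle. Pick any cycle $C$ of $G$ of length $k \ge 3$. By Remark \ref{existence}, I can take $C$ as the first ear $L_1$ and extend it to a generalized ear decomposition whose remaining ears are $v(G)-k$ copies of 2-cycles together with some paths of length $1$. Paths of length $1$ are odd and contribute nothing to $\varphi$; each 2-cycle is an even ear; and $L_1$ is an even ear precisely when $k$ is even. This yields
$$\varphi(G)\le v(G)-k+1\le v(G)-2,$$
so $\mu(G)\le v(G) - 3/2 < v(G)-1$, giving strict inequality whenever $G$ is not a tree.

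Second, suppose $G$ is a tree. I claim every ear in every generalized ear decomposition of $G$ is a 2-cycle. Indeed, $L_1$ must be a 2-cycle since $G$ has no cycles of length $\ge 3$, and any later ear $L_i$ of length $\ge 2$ would have two distinct endpoints $u,v$ in the earlier subgraph $L_1+\cdots+L_{i-1}$, which--being contained in a tree--already contains a unique path from $u$ to $v$; concatenating this path with $L_i$ would produce a cycle in $G$, contradicting acyclicity. Hence the decomposition partitions the $v(G)-1$ edges into 2-cycles, each of length $2$ and therefore even, giving $\varphi(G) = v(G)-1$ and $\mu(G)=v(G)-1$.

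The two cases combine to give both the inequality and the equality condition. The only delicate point is the claim in the tree case that no ear of length $\ge 2$ can occur; this relies on acyclicity of $G$ together with the requirement that the endpoints of any later ear lie on the earlier subgraph. Once this observation is made, the rest of the argument is just a count of 2-cycles and an invocation of Remark \ref{existence}, so I do not expect any serious obstacle.
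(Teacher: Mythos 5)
Your proof is correct and follows essentially the same route as the paper: the non-tree case invokes Remark \ref{existence} with a cycle of length $\ge 3$ as the first ear to get $\varphi(G)\le v(G)-2$, and the tree case shows that every ear of a generalized ear decomposition of a tree is a 2-cycle, which is exactly the argument behind the paper's appeal to the proof of Corollary \ref{tree}. One small point: in the tree case your cycle argument should be applied to \emph{all} path ears, including those of length 1 (it works verbatim, since such an ear also has two distinct endpoints lying in the earlier, connected partial union), as these too must be excluded before concluding that every ear is a 2-cycle and hence even.
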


\begin{proof}
If $G$ is a tree, $\varphi(G) = v(G)-1$ by the proof of Corollary \ref{tree}.
If $G$ is not a tree, $G$ has a cycle $C$ of length $\ge 3$.
By Remark \ref{existence}, $C$ is the first ear of some generalized ear decomposition $\E$ whose subsequent ears are edges. The 2-cycles among these edges equals the number of vertices of $G-C$. Since $C$ may be an even cycle, $\varphi(G) \le v(G)-|C|+1 \le v(G)-2$. 
\end{proof}

\begin{Theorem} \label{bound}
Let $G$ be a connected bipartite graph. 
Let $C_1,...,C_r$ be disjoint cycles of $G$, $r \ge 1$.
Then 
$$\dstab(I) \le v(G)-\varepsilon_0(G)-(v(C_1)+\cdots+v(C_r))/2 + r.$$
Equality holds if these cycles are the only cycles of the graph and have lengths $\ge 6$.
\end{Theorem}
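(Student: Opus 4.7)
By Theorem \ref{main}, $\dstab(I) = s(G) + 1$, so the inequality reduces to an upper bound on $s(G)$. We may assume $r \ge 1$ (otherwise the bound reduces to $\dstab(I) \le v(G)-v_1(G)$, which follows from Lemma \ref{mu} applied to $G_U$ with $U = V(G)\setminus\text{leaves}(G)$). Take $U = V(G)\setminus\text{leaves}(G)$: each leaf is dominated by its unique non-leaf support in $U$, and removing leaves from a connected graph preserves connectivity, so $G_U$ is a dominating connected bipartite induced subgraph of $G$ with $|U| = v(G)-v_1(G)$, and every cycle $C_j$ lies in $G_U$ (its vertices are non-leaves). Hence $s(G) \le \mu(G_U) = (\varphi(G_U)+|U|-1)/2$, and it suffices to build a generalized ear decomposition of $G_U$ with $\varphi(G_U) \le |U| - \sum v(C_j) + 2r - 1$.

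To construct such a decomposition, choose for each $j$ an edge $e_j$ of $C_j$ so that one endpoint (call it $v_j$) has a neighbor in $G_U$ outside $C_j$---possible for $j \ge 2$ since $G_U$ is connected and $C_j \neq G_U$---and pick a spanning tree $T$ of $G_U$ containing every path $C_j - e_j$ (such $T$ exists because these paths form a pairwise disjoint forest inside $G_U$, which then extends to a spanning tree). Build the decomposition in the order given by $T$, rooted at $C_1$: set $L_1 = C_1$, contributing one even ear. For each $j = 2,\ldots,r$, attach $C_j$ using three ears: first a $2$-cycle along the tree edge from a previously added vertex to the endpoint $v_j$ of $C_j - e_j$; next a $2$-cycle $\{v_j, w_j\}$ using the edge $e_j$, which brings in the other endpoint $w_j$; finally the path $C_j - e_j$ as a single path ear from $v_j$ to $w_j$ of odd length $v(C_j)-1$, now with both endpoints in the previous subgraph. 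This contributes exactly two even ears per $j \ge 2$. Then add each of the $|U|-\sum v(C_j)$ outside vertices by a single $2$-cycle along its tree edge (one even ear each, processed in $T$-compatible order), and each remaining non-tree edge as an odd length-$1$ path ear. Summing,
$$\varphi(G_U) \le 1 + 2(r-1) + (|U| - \textstyle\sum v(C_j)) = |U| - \textstyle\sum v(C_j) + 2r - 1,$$
and Theorem \ref{main} yields the stated upper bound on $\dstab(I)$.

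For the equality assertion, suppose $C_1,\ldots,C_r$ are the only cycles of $G$ and each has length $\ge 6$. Under this hypothesis $G$ and every induced subgraph $G_U$ are cactus graphs (disjoint cycles linked by tree edges). A direct parity analysis of generalized ear decompositions on bipartite cacti shows that in fact $\varphi(G_U) = (|U| - \sum_{C_j \subseteq U} v(C_j)) + 2r_U - 1$ exactly, where $r_U$ is the number of $C_j$ contained in $U$: the $\le$ direction is my construction, while the $\ge$ direction uses that each cycle of $G_U$ must be closed by its own cycle-creating ear, necessarily a subpath of that cycle of length at most $v(C_j) - 1$ (odd). This gives $\mu(G_U) = |U| - \tfrac{1}{2}\sum_{C_j\subseteq U} v(C_j) + r_U - 1$, and the lower bound $s(G) \ge v(G)-v_1(G) - \tfrac{1}{2}\sum v(C_j) + r - 1$ reduces to verifying that $\mu(G_U)$ is minimized at $U = V \setminus \text{leaves}$. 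This uses the hypothesis $v(C_j) \ge 6$, which forces $|C_j \cap U| \ge v(C_j)/2 + 1$ whenever $C_j \not\subseteq U$ (otherwise too many consecutive cycle vertices would lie outside $U$, violating domination in the cactus), yielding the required vertex-counting inequality on $|U|$.

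The main obstacle I expect is the lower bound step in the equality case: both the exact formula for $\varphi(G_U)$ on bipartite cacti and the ensuing vertex-counting inequality demand a careful case analysis combining the cactus structure, the domination and connectedness constraints on $U$, and the length-$\ge 6$ hypothesis (which is essential, as one can check directly that for $v(C_j) \le 4$ the bound is not tight). By contrast, the upper-bound construction is essentially a direct induction on the spanning tree $T$ once $T$ is arranged to contain the paths $C_j - e_j$.
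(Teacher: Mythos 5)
Your derivation of the inequality is correct and takes a more direct route than the paper's: you exhibit one dominating connected induced subgraph, namely $G_U$ with $U$ the set of non-leaves, build a generalized ear decomposition of it with $1+2(r-1)+(|U|-\sum_j v(C_j))$ even ears, and invoke Theorem \ref{main}; the paper instead shrinks the cycles to points, takes a spanning tree $T$ of the shrunken graph, lifts it to a connected spanning subgraph $H$ of $G$ whose only cycles are $C_1,\dots,C_r$, and combines $s(G)\le s(H)$ (Lemma \ref{spanning}) with Corollary \ref{tree} and a transfer of ear decompositions from $T$ to $H$. The counts agree. One detail of your construction is off as written: the spanning tree containing the paths $C_j-e_j$ need not contain an edge from an earlier vertex to your chosen endpoint $v_j$, so the announced ``tree edge to $v_j$'' may not exist. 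This is harmless: attach $C_j$ at whichever of its vertices, say $u_j$, the tree order reaches first, then use one 2-cycle on a cycle edge at $u_j$ and the remaining odd path of $C_j$; the tally of two even ears per cycle beyond $C_1$ is unchanged and the choice of $e_j,v_j$ becomes unnecessary.

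The genuine gap is in the equality half, as you anticipate: the two key claims are true, but the reasons you give do not prove them. For the lower bound $\varphi(G_U)\ge |U|-\sum_{C_j\subseteq U}v(C_j)+2r_U-1$ the relevant point is not that ``the cycle-creating ear is odd''; it is that (i) every bridge of $G_U$ must be an ear by itself, necessarily a 2-cycle, since any ear of length $\ge 2$ whose distinct endpoints lie in the earlier subgraph lies on a cycle of $G_U$, and (ii) because two distinct vertices of $C_j$ are joined in $G$ only through edges of $C_j$ (the $C_i$ being the only cycles), the earliest ear using an edge of $C_j$ meets the earlier subgraph in at most one vertex, hence is a 2-cycle or the whole even cycle; either way it is even. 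This yields at least $B+r_U$ even ears with $B=|U|-1+r_U-\sum_{C_j\subseteq U}v(C_j)$, which is the claimed formula. For the vertex count, domination alone does not force $|C_j\cap U|\ge v(C_j)/2+1$: you first need that every cut vertex of $G$ belongs to $U$ (if $w\notin U$, the components of $G-w$ missing $U$ cannot be dominated), so all non-leaf vertices off the cycles lie in $U$ and every vertex of $C_j\setminus U$ has degree 2; then connectedness of $G_U$ forces $C_j\cap U$ to be a single arc of the cycle, and domination forces the complementary arc to have at most two vertices, so in fact $|C_j\cap U|\ge v(C_j)-2\ge v(C_j)/2+1$, exactly because $v(C_j)\ge 6$. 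With these two facts supplied your computation closes; the paper's proof runs the analogous analysis, showing for an optimal $H_U$ that the trace $U_i=U\cap C_i$ is a dominating connected subpath, so $|U_i|\in\{v(C_i),v(C_i)-1,v(C_i)-2\}$, and checking $\varphi(\E_i)+|U_i|\ge v(C_i)+1$ in each case, which is where the hypothesis $v(C_i)\ge 6$ enters.
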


\begin{proof}
Let $G'$ be the graph obtained from $G$ by shrinking every cycle $C_i$ to a new vertex $c_i$.
Then there is a graph epimorphism $p: G \to G'$ that maps every vertex of $G \setminus \cup_{i=1}^rC_i$ to itself and the vertices of $C_i$ to $c_i$. Let $U'$ be the set consisting $c_1,...,c_r$ and all other vertices of $G'$ of degree $\ge 2$. Then $G'_{U'}$ is a dominating connected subgraph of $G'$. 

Let $T$ be a spanning tree of $G'_{U'}$. Then 
$$v(T) = v(G) -  \varepsilon_0(G) - (v(C_1)+\cdots+v(C_r)) + r.$$
For every edge of $T$ we choose a preimage in $G$. 
These preimages together with $C_1,...,C_r$ form a connected spanning subgraph $H$ of $G$ such that $C_1,...,C_r$ are the only cycles of $H$. We have
$$v(H) = v(G) -  \varepsilon_0(G).$$

Let $\F$ be a generalized ear decomposition of $T$. 
Then $\F$ consists of only 2-cycles with $|\F| = v(T)-1$.
Let $\E$ be a generalized ear decomposition of $H$ consisting of the preimages of the edges of $\F$ in $H$ and the cycles $C_1,...,C_r$ in some suitable order. Then 
$\varphi(\E) = |\F| + r = v(T) + r-1.$  Therefore,
\begin{align*}
\mu(H) & \le (\varphi(\E) + v(H) -1)/2 \le (v(T) + v(H) + r-2)/2 \\
& = v(G) -  \varepsilon_0(G) - v(C_1)+\cdots+v(C_r))/2 + r-1.
\end{align*}
By Lemma \ref{spanning}, $s(G) \le s(H) \le \mu(H)$. Hence we obtain the bound of the theorem.

Assume that $C_1,...,C_r$ are the only cycles of $G$ and $v(C_i) \ge 6$, $i = 1,...,r$. We will prove that the bound of the theorem is attained. 

Let $G_U$ be a dominating connected induced subgraph of $G$ with $\mu(G_U) = s(G)$. Let $U_i$ be the set of vertices of $C_i$ in $U$.  
Since $G_{U_i}$ is a dominating connected induced subgraph of $C_i$, $G_{U_i}$ is a path and $|U_i| \ge v(C_i)-2.$  

Let $\E$ be a generalized ear decomposition of $G_U$ with $\varphi(\E) = \varphi(G_U).$
The edges of $G_U$ not contained in $C_1,...,C_r$ are 2-cycles of $\E$ because they are not contained in any cycle.
Therefore, the ears of $\E$ which involve only vertices of $C_i$ form a generalized ear decomposition of $H_{U_i}$.
Let $\E_i$ denotes this decomposition. 
If $U_i = C_i$, then $\varphi(\E_i) = 1$. Hence $\varphi(\E_i) + |U_i| = v(C_i) + 1.$
If $|U_i| = v(C_i)-1$, then $\varphi(\E_i) = v(C_i) -2$. Hence $\varphi(\E_i) + |U_i| = 2v(C_i) - 3 \ge v(C_i) +1.$
If $|U_i| = v(C_i)-2$, then $\varphi(\E_i) = v(C_i)-3$. Hence $\varphi(\E_i) + |U_i| = 2v(C_i) - 5 \ge v(C_i) +1$ because $v(C_i) \ge 6$.
So we always have $\varphi(\E_i) + |U_i| \ge v(C_i) + 1$, $i = 1,...,r$.

By the definition of the map $p$ we have
$$|U| = |p(U)| +  |U_1| + \cdots +  |U_r| - r.$$
Since the number of the edges not contained in $C_1,...,C_r$ is $|p(U)|-1$, 
$$\varphi(\E) = |p(U)|-1 + \varphi(\E_i) + \cdots + \varphi(\E_r).$$
From this it follows that 
\begin{align*}
s(G) & = (\varphi(\E) + |U|-1)/2\\
&= \big(2|p(U)|  + \varphi(\E_1) + \cdots + \varphi(\E_r) +  |U_1| + \cdots +  |U_r| -r -2\big)/2\\
& \ge |p(U)| + (v(C_1) + \cdots + v(C_r))/2 - 1.
\end{align*}
Since $C_1,...,C_r$ are the only cycles of $G$, $G'$ is a tree.
Since $G'_{p(U)}$ is a dominating connected subgraph of $G'$, $p(U)$ contains all vertices of degree $\ge 2$ of $G'$. Since $U$ contains at least a vertex of each cycle $C_i$,
$p(U)$ contains all vertices $c_i$. From this it follows that 
$|p(U)| \ge v(G) - \varepsilon_0(G)$. Therefore,
$$s(G) \ge v(G) - \varepsilon_0(G) + (v(C_1) + \cdots + v(C_r))/2 + r-1.$$
Hence the bound of the theorem is attained. 
\end{proof}

The case $r = 1$ of Theorem \ref{bound} immediately yields the following result of \cite{TNT}, whose original proof is very complicated.

\begin{Corollary} \label{unicyclic} \cite[Proposition 3.4 and Lemma 5.3]{TNT}
Let $G$ be a connected bipartite graph. Let $2k$ be the length of its longest cycle. Then 
$$\dstab(I) \le v(G) -\varepsilon_0(G)-k+1.$$
Equality holds if $G$ is a unicyclic graph and $k \ge 3$.
\end{Corollary}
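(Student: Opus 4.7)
The plan is to derive this corollary as the specialization $r=1$ of Theorem \ref{bound}, which has already done all the serious work. The underlying observation is that, in the bipartite setting, any cycle has even length, so the longest cycle of $G$ has an even number $2k$ of vertices and contributes $v(C_1)/2 = k$ to the formula in Theorem \ref{bound}.

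Concretely, first I would note that since $G$ is assumed to have a longest cycle of length $2k$, the graph contains at least one cycle. Pick $C_1$ to be any such longest cycle; this is a (vacuously disjoint) family of cycles of $G$ with $r=1$ and $v(C_1)=2k$. Applying Theorem \ref{bound} directly gives
$$\dstab(I) \le v(G) - v_1(G) - v(C_1)/2 + 1 = v(G) - v_1(G) - k + 1,$$
which is the desired upper bound.

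For the equality assertion, suppose $G$ is unicyclic and $k \ge 3$. Being unicyclic means $G$ has exactly one cycle, and that cycle is necessarily $C_1$ itself (being the longest cycle). Thus $\{C_1\}$ is the collection of \emph{all} cycles of $G$, and its length satisfies $v(C_1) = 2k \ge 6$. These are exactly the two conditions under which Theorem \ref{bound} asserts equality, so $\dstab(I) = v(G)-v_1(G)-k+1$.

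The main obstacle here is essentially non-existent: the combinatorial content lives entirely in Theorem \ref{bound}, whose proof handles both the construction of a suitable dominating subgraph (using an ear consisting of the chosen cycle together with a spanning tree structure) and the lower bound argument (which requires the cycle length to be at least $6$ in order to rule out the degeneration of an induced subpath gaining too few even ears). So the only thing to verify here is the bookkeeping $v(C_1)/2 = k$ coming from bipartiteness, and the observation that unicyclicity is precisely the $r=1$ instance of the equality hypothesis of Theorem \ref{bound}.
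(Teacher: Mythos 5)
Your proposal is correct and is exactly the paper's argument: the paper derives this corollary as the immediate specialization $r=1$ of Theorem \ref{bound}, with $v(C_1)=2k$ by bipartiteness and the unicyclic hypothesis together with $2k\ge 6$ triggering the equality clause. Nothing further is needed.
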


Equality does not hold in Corollary \ref{unicyclic} if we drop the assumption $k \ge 3$.

\begin{Example}
If $G = C_4$, then $\dstab(I) = 1$ by Corollary \ref{small}(1). If $G$ is the union of $C_4$ with a leaf edge, then $\dstab(I) = 2$ by Corollary \ref{small}(2). In both cases, $v(G) -\varepsilon_0(G)-k+1 = 3$.
\end{Example}

A graph is called  {\it well-covered} (or unmixed) if every minimal vertex cover has the same size \cite{Pl1,Vi}. 
In algebraic terms, this condition means that the edge ideal is an unmixed ideal, i.e. the minimal primes have the same height. 
The order of a well-covered bipartite graph must be an even number. 
The following bound for $\dstab(I)$ is a main result of \cite{BM}. This bound can be easily proved by Theorem \ref{main}. 
Moreover, we can characterize those graphs for which this bound is attained.

\begin{Theorem} \label{well-covered}
Let $G$ be a connected well-covered bipartite graph on $n$ vertices. Then 
$$\dstab(I) \le n/2.$$
Equality holds if and only if $G$ is a tree with $n/2$ leaves.
\end{Theorem}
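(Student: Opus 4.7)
By Theorem~\ref{main}, $\dstab(I) = s(G) + 1$, so it suffices to prove $s(G) \le n/2 - 1$, with equality if and only if $G$ is a tree with $v_1(G) = n/2$ leaves.

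Since $G$ is well-covered bipartite with bipartition $(A,B)$, both $A$ and $B$ are maximal independent sets of the same size, namely $n/2$, so $n$ is even and, by K\"onig's theorem, $G$ has a perfect matching $M$.

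For the upper bound: if $G$ is complete bipartite then $s(G) = 0 \le n/2 - 1$ and we are done. Otherwise, my plan is to exhibit a minimum vertex cover $D$ of $G$ whose induced subgraph $G_D$ is connected. Since $G$ has no isolated vertex, every vertex cover is a dominating set, so $G_D$ would then be a dominating connected induced subgraph of $G$ with $|D| = n/2$. Lemma~\ref{mu} would then yield
\[
s(G) \le \mu(G_D) \le |D| - 1 = \tfrac{n}{2} - 1,
\]
hence $\dstab(I) \le n/2$. To construct such a $D$ I would use the structural description of well-covered bipartite graphs (Favaron--Ravindra): the perfect matching $M = \{a_i b_i\}_{i=1}^{n/2}$ can be chosen so that the cross-edges $a_i b_j$ (for $i \neq j$) come in symmetric pairs $\{a_i b_j,\, a_j b_i\}$, and one may select one endpoint from each matching edge in a coordinated manner so that the resulting set is a minimum cover whose induced subgraph is connected. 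The hypothesis that $G$ is not complete bipartite rules out the degenerate case in which the only minimum covers are $A$ and $B$ themselves (both of which induce edgeless subgraphs).

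For the equality case: if $G$ is a tree with $v_1(G) = n/2$, Corollary~\ref{tree} gives $\dstab(I) = v(G) - v_1(G) = n/2$. Conversely, assume $G$ is well-covered bipartite and $\dstab(I) = n/2$. If $G$ is a tree, Corollary~\ref{tree} forces $v_1(G) = n/2$. If $G$ is not a tree, $G$ contains an even cycle $C$, and I would refine the construction above so that the minimum cover $D$ includes all vertices of $C$. Then $G_D$ is connected and contains a cycle, so it is not a tree, and by the strict inequality in Lemma~\ref{mu},
\[
\mu(G_D) \le |D| - 2 = \tfrac{n}{2} - 2,
\]
giving $\dstab(I) \le n/2 - 1 < n/2$, a contradiction.

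The main obstacle is the structural lemma in the upper-bound step: showing that a non-complete-bipartite well-covered connected bipartite graph admits a minimum vertex cover $D$ inducing a connected subgraph (and, when $G$ has a cycle, one whose induced subgraph contains that cycle). This is where the well-covered hypothesis enters essentially, via the rigid block structure imposed on $G$ by the perfect matching.
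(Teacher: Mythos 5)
Your reduction to showing $s(G)\le n/2-1$ via Theorem \ref{main} and Lemma \ref{mu}, and the forward equality direction via Corollary \ref{tree}, agree with the paper, but the proposal has two genuine gaps. First, the upper bound rests on the unproved claim that a non-complete-bipartite well-covered connected bipartite graph admits a \emph{minimum vertex cover} whose induced subgraph is connected; you flag this yourself as the main obstacle, and nothing beyond an appeal to a ``coordinated'' choice in the Ravindra/Favaron-type structure is offered. The paper never needs a cover: using Plummer's characterization (for each matching edge $\{u,v\}$ the neighbourhoods of $u$ and $v$ induce a complete bipartite graph) it constructs by induction on $n$ a dominating connected \emph{induced} subgraph $G_U$ with $|U|=n/2$, consisting of one endpoint of each matching edge, all of degree $\ge 2$; dominating and connected is all that Lemma \ref{mu} requires, and a dominating set is a much weaker (and easier to build) object than a connected-inducing minimum cover.

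Second, and more seriously, your converse direction cannot work as designed. By K\"onig's theorem every minimum vertex cover of a bipartite graph with a perfect matching contains exactly one endpoint of each matching edge, so it can never contain all vertices of a cycle that passes through a matching edge. There are well-covered connected bipartite non-trees in which every cycle does so: attach pendant vertices $x,y$ to two adjacent vertices $a_1,b_1$ of a $4$-cycle $a_1b_1a_2b_2$; this graph is well-covered, its unique perfect matching is $\{a_1x,\ b_1y,\ a_2b_2\}$, and its unique cycle contains the matching edge $a_2b_2$, so the refined cover $D\supseteq V(C)$ you need does not exist. The theorem still holds there (the edge $\{a_1,b_1\}$ is a dominating connected induced subgraph, so $s(G)=1$), but the witness for $s(G)\le n/2-2$ is not a vertex cover at all, which shows the cover-based framework is too rigid for the equality analysis. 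The paper argues differently: assuming $s(G)=n/2-1$, Lemma \ref{mu} forces the constructed $G_U$ to be a tree, and a replacement argument inside the inductive construction shows that the matched partner of every degree-$\ge 2$ vertex must be a leaf, whence $G$ is a tree with $n/2$ leaves.
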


\begin{proof}
By Corollary \ref{small}(1), we may assume that $G$ is not a complete bipartite graph.
From this it follows that $n \ge 4$. By \cite[Theorem 3.1]{Pl2}, $G$ has a perfect matching $M$ such that for every edge $\{u,u'\}$ in $M$, the induced subgraph of the neighbors of $u$ and $u'$ forms a complete bipartite graph. We claim that for any vertex $u$ with $\deg_Gu \ge 2$, there is a dominating connected subgraph $G_U$ with $n/2$ vertices containing $u$ such that $\deg_Gv \ge 2$ for any $v \in U$ and each edge of $M$ contains only a vertex of $U$. As a consequence, $\mu(G_U) \le n/2-1$ by Lemma \ref{mu}. Hence 
$$\dstab(I) = s(G) + 1 \le \mu(G_U)+1 \le n/2.$$

If $n = 4$, $G$ must be a path of length 3 and $M$ is the set of the two outer edges of this path. In this case, $U$ is the set of the two inner vertices of this path.  

If $n > 4$, let $u$ be an arbitrary vertex with $\deg_Gu \ge 2$.
Let $u'$ be the adjacent vertex of $u$ in $M$ and $G' := G-u-u'$.
Let $G'_1,...,G_c'$ be the connected components of $G'$. By the above property of $G$, every $G_j'$ must contain an adjacent vertex of $u$.
Let $M_j$ be the restriction of $M$ in $G_j'$, $j = 1,...,c$.
Then $M_j$ is a perfect matching of $G_j'$ such that for every edge $\{v,v'\}$ in $M_j$, the induced subgraph of $G'$ on the union of the neighbourhoods of $v$ and $v'$ forms a complete bipartite graph. 
By \cite[Theorem 3.1]{Pl2}, this implies that $G_i'$ is also a connected well-covered bipartite graph. 
We will show that every $G_j$ has a dominating connected subgraph $G_{U_j}$ with $v(G_j)/2$ vertices which contain a vertex adjacent to $u$. 

Let $v$ be an adjacent vertex of $u$ in $G_j$. If $v(G_j) = 2$, we choose $U_j = \{v\}$.
If $v(G_j) \ge 4$ and $\deg_{G_j} v = 1$, let $v'$ be the adjacent vertex of $v$ in $M_j$. Then $v'$ must be adjacent to another vertex $w$ of $G_j$. By \cite[Theorem 3.1]{Pl2}, $w$ is adjacent to $u$.
Replacing $v$ by $w$, we may assume that $\deg_{G_j} v \ge 2$. 
Using induction we may assume that $G_j'$ has a dominating connected subgraph
$G_{U_j}$  with $v(V_j)/2$ vertices containing $v$ such that  $\deg_{G_j}w \ge 2$ for any $w \in U_j$ and each edge of $M_j$ contains only a vertex of $U_j$. 

Let $U = U_1 \cup \cdots \cup U_c + u$. Then $G_U$ is clearly a dominating connected subgraph of $G$ with $n/2$ vertices such that $\deg_Gv \ge 2$ for any $v \in U$ and each edge of $M$ contains only a vertex of $U$. This completes the proof of the claim.

Now we consider the case $\dstab(I) = n/2$. Let $G_U$ be a dominating dominating connected subgraph as in the above proof.
Then $\mu(G_U) = n/2-1$. By Lemma \ref{mu}, 
$G_U$ is a tree. By the above claim, the vertices of $U$ have degree $\ge 2$ and each edge of $M$ contains only a vertex of $U$. 
Therefore, $G$ is a tree with $n/2$ leaves if we can show that every edge of $M$ has a leaf. We may assume that $n > 4$.

Let $u$ be an arbitrary vertex of degree $\ge 2$. It suffices to show that its adjacent vertex $u'$ in $M$ is a leaf. Let $G' := G-u-u'$. 
If $u'$ is not a leaf, there exists an adjacent vertex $v$ of $u'$ in $G'$. Since $v$ is adjacent to any vertex adjacent to $u$, $G'$ is connected. As in the proof of the claim, there is a dominating connected subgraph $G_{U'}$ of $G'$ with $n/2-1$ vertices which contains a vertex adjacent to $u$.  If $v \not\in U'$, then $v' \in U'$. Let $w$ be an adjacent vertex of $v'$ in $U'$.  By \cite[Theorem 3.1]{Pl2}, $w$ is an adjacent vertex of $u'$ in $G'$. Replacing $v$ by $w$, we may assume that $v \in U'$. Then $u,u'$ are adjacent to vertices in $U'$. Hence $G_{U'}$ is a dominating connected subgraph of $G$. As a subgraph of $G_U$, $G_{U'}$ is a tree. This implies $\mu(G_{U'}) = n/2-2$ by Lemma \ref{mu}. Therefore, $s(G) \le \mu(G_{U'}) = n/2-2$, a contradiction to the assumption that $\dstab(I) = n/2$.

Conversely, if $G$ is a tree with $n/2$ leaves, where $n$ is even, then $\dstab(I) = n-n/2 = n/2$ by Corollary \ref{tree}.
This completes the proof of Theorem \ref{well-covered}.
\end{proof} 

\begin{Remark} The notion of $\mu(G)$ was introduced in coding theory \cite{SZ}. 
It was defined as the maximum cardinality of a join of $G$.
A subset $S$ of edges of $G$ is called a {\it join} if $|C \cap S| \le |C|/2$ holds for every circle $C$ of $G$.
It was proved in \cite[Main Theorem]{Fr} that if $G$ is a 2-edge connected graph, then 
$\mu(G) = (\varphi(G)+v(G)-1)/2$, where $\varphi(G)$ is the minimum number of even ear in ear decompositions of $G$. By Lemma \ref{phi}, this definition of $\varphi(G)$ is the same as by means of generalized ear decompositions.
\end{Remark}


\section{A general formula for $\dstab(I)$}

In this section, we deduce from Theorem \ref{main} a general formula for $\dstab(I)$ when $G$ is an arbitrary graph. For that we need an explicit formula for $\dstab(I)$ when $G$ is a connected nonbipartite graph. 
Such a formula can be derived from the study on the associated primes of powers of edge ideals in \cite{LT}.

Let $G_1,...,G_c$ be the connected components of $G$.
A generalized ear decomposition of $G$ is a sequence of generalized ear decompositions of $G_1,...,G_c$.
If $G$ is a strongly nonbipartite graph, every $G_i$ has an odd cycle. 
By Remark \ref{existence}, $G$ has generalized ear decompositions which starts with an odd cycle in each $G_i$.
Let $\varphi^*(G)$ denote the minimum number of even ears in such generalized ear decompositions of $G$. 
Set
$$\mu^*(G) := (\varphi^*(G)+n-c)/2.$$

If $G$ is a connected nonbipartite graph, we denote by $s(G)$ the minimum number of $\mu^*(G_U)$ among dominating strongly nonbipartite subgraphs $G_U$ of $G$. Note that $G_U$ needs not be a connected graph. 

\begin{Theorem} \label{nonbipartite} 
Let $G$ be a connected nonbipartite graph. Then
$$\dstab(I) = s(G) + 1.$$
\end{Theorem}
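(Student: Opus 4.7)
The plan is to reduce Theorem \ref{nonbipartite} to the formula established in \cite{LT} for when the maximal graded ideal $\mm$ becomes an associated prime of $I^t$. First, since $G$ is connected and nonbipartite, it has no connected bipartite components, so the integer $s$ in Theorem \ref{general}(1) equals $0$. Hence $\lim_{t\to\infty}\depth R/I^t = 0$, and Theorem \ref{general}(2) yields
$$\dstab(I) \;=\; \min\{t \mid \depth R/I^t = 0\} \;=\; \min\{t \mid \mm \in \Ass R/I^t\},$$
where the second equality is the standard fact that a finitely generated graded module over $R$ has depth zero precisely when $\mm$ is associated to it.

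Second, I would invoke \cite[Corollary 3.8]{LT}, which asserts exactly that for a connected nonbipartite graph $G$ one has $\min\{t \mid \mm \in \Ass R/I^t\} = s(G)+1$, where $s(G)$ is the invariant defined via $\mu^*(G_U)$ over dominating strongly nonbipartite induced subgraphs. Combining this with the reduction of the previous paragraph gives the theorem. The derivation in \cite{LT} runs in strict parallel with Sections~1--3 of the present paper: the condition $\mm \in \Ass R/I^t$ is recast through a degree-complex criterion as the feasibility of a suitable diophantine system on $\NN^n$, whose nonzero solutions are then shown to correspond to parallelizations $G^\a$ whose base graph $G_{\supp(\a)}$ is a dominating strongly nonbipartite subgraph and which carries the nonbipartite analogue of the matching-covered structure. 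The minimum order of such a solution is computed, via generalized ear decompositions beginning with an odd cycle in each connected component, to be $\varphi^*(G_U) + |U| - c(U) = 2\mu^*(G_U)$, which upon rescaling and adding $1$ gives the formula $s(G)+1$.

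The main conceptual obstacle in carrying out this derivation from scratch is the translation between the diophantine system and ear decompositions in the presence of odd cycles: in the bipartite case one relies crucially on the Lovasz--Plummer theorem characterizing matching-covered bipartite graphs by an ear decomposition with exactly one even ear, whereas in the nonbipartite setting one must instead use the odd cycles as the mandatory initial ears in each component and treat even ears as the "parity-breaking" pieces counted by $\varphi^*$. Since \cite{LT} has already executed this entire translation for the purpose of studying $\Ass R/I^t$, no new combinatorial work is needed here; the proof reduces to combining Theorem \ref{general} with \cite[Corollary 3.8]{LT}.
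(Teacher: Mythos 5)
Your proposal is correct and takes essentially the same route as the paper: reduce $\dstab(I)$ to the first $t$ with $\depth R/I^t = 0$, translate that into $\mm \in \Ass(R/I^t)$, and quote the result of \cite{LT} (the paper invokes \cite[Theorem 3.6]{LT}, that $\mm$ is associated to $I^t$ exactly when $t \ge s(G)+1$, which is the same content as the formula you cite from \cite[Corollary 3.8]{LT}). Your sketch of how \cite{LT} establishes its result is not needed for the argument.
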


\begin{proof} 
By \cite[Lemma 3.4]{CMS}, $\displaystyle \lim_{t \to \infty}\depth R/I^t = 0$.
We have $\depth R/I^t = 0$ if and only if $\mm$ is an associated prime of $I^t$.
By \cite[Theorem 3.6]{LT}, $\mm$ is an associated prime of $I^t$ if and only if $t \ge s(G)+1$. 
Therefore, $\dstab(I) = s(G)+1$.
\end{proof}

Since $I$ is unmixed, we always have $\depth R/I > 0$. Hence $\dstab(I) > 1$ if $G$ is a connected nonbipartite graph.
Using results of \cite[Section 5]{LT} we can describe those graphs with $\dstab(I) = 2,3$. We refer to \cite{LT} for more details and other applications of Theorem \ref{nonbipartite}.

\begin{Remark}
The definition of $\varphi^*(G)$ in \cite{LT} is based on a slightly different definition of generalized ear decomposition.
There, a generalized ear decomposition of $G$ is defined as a sequence of ears such that the first ear is a cycle, the endpoints of each subsequent ears are the only vertices that belong to earlier ears, and the ears pass through all vertices of the graph. This decomposition needs not involve all edges of $G$. Adding the remaining edges of $G$, 
we obtain a generalized ear decomposition of $G$ in the sense of this paper. Since these edges have both endpoints in the ears of the former decomposition, they are paths of length 1. Therefore, the numbers of even ears in both generalized ear decompositions are the same. If we define $\varphi^*(G)$ as the minimum number of even ears in generalized ear decompositions in the sense of \cite{LT} which start with an odd cycle in each connected component of $G$, then $\varphi^*(G)$ is the same as in the above definition.
\end{Remark}

Combining Theorem \ref{main} and Theorem \ref{nonbipartite} we obtain the following general formula for the index of depth stability of edge ideals of any graph.

\begin{Theorem} \label{arbitrary}
Let $G$ be an arbitrary graph without isolated vertices. Let $G_1,...,G_c$ be the connected components of $G$. Then
$$\dstab(I) = s(G_1) + \cdots + s(G_c)+1.$$
\end{Theorem}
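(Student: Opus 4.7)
The plan is to reduce the arbitrary case directly to the connected case already handled, using the additivity formula from Theorem~\ref{general}(3). That result gives
$$\dstab(I) = \sum_{j=1}^c \dstab(I_j) - c + 1,$$
where $I_j$ is the edge ideal of the connected component $G_j$. So it suffices to evaluate each $\dstab(I_j)$ separately.

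For each $j$, the component $G_j$ is connected and (since $G$ has no isolated vertices) has at least one edge. There are only two mutually exclusive cases. If $G_j$ is bipartite, then Theorem~\ref{main} applies and gives $\dstab(I_j) = s(G_j) + 1$, where $s(G_j)$ is the bipartite invariant defined via $\mu$ on dominating connected induced subgraphs. If $G_j$ is nonbipartite, then Theorem~\ref{nonbipartite} applies and gives $\dstab(I_j) = s(G_j) + 1$, where $s(G_j)$ is now the nonbipartite invariant defined via $\mu^*$ on dominating strongly nonbipartite induced subgraphs. In either case, with the notation $s(G_j)$ interpreted according to the type of $G_j$, we obtain $\dstab(I_j) = s(G_j) + 1$.

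Substituting this into the formula from Theorem~\ref{general}(3) yields
$$\dstab(I) = \sum_{j=1}^c \bigl(s(G_j) + 1\bigr) - c + 1 = \sum_{j=1}^c s(G_j) + 1,$$
which is the desired formula. There is essentially no obstacle: the theorem is a clean assembly of Theorem~\ref{general}(3), Theorem~\ref{main}, and Theorem~\ref{nonbipartite}, with the only point of care being that the symbol $s(G_j)$ refers to two different (but notationally compatible) invariants according to the bipartite or nonbipartite nature of the component. The proof is therefore essentially a two-line computation once the correct interpretation of $s$ is recorded.
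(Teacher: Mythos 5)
Your proposal is correct and follows the same route as the paper: apply Theorem~\ref{general}(3) to reduce to connected components, then use Theorem~\ref{main} for bipartite components and Theorem~\ref{nonbipartite} for nonbipartite ones, noting that in both cases $\dstab(I_j) = s(G_j)+1$ with $s$ interpreted according to the component's type. No gaps; this matches the paper's argument essentially verbatim.
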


\begin{proof} 
Let $I_j$ be the edge ideal of $G_i$, $i = 1,...,c$.
By Theorem \ref{general}, 
$$\dstab(I) = \dstab(I_1) + \cdots + \dstab(I_c) - c +1.$$
By Theorem \ref{main} and Theorem \ref{nonbipartite}, we have $\dstab(I_i) = s(G_i)+1$, 
which is independent of whether $G_j$ is bipartite or non-bipartite. From this it follows that
$$\dstab(I) = s(G_1) + \cdots + s(G_c)+1.$$
\end{proof}

\begin{Remark} 
The approach in the nonbipartite case is completely different than that in the bipartite case.
In the bipartite case, we have to study the vanishing of the first local cohomology module and come over to the existence of matching-covered parallelizations. In the nonbipartite case, the socle module is studied leading to the existence of matching-critical parallelizations \cite{LT}.  Recall that a connected graph is {\it matching-critical} if the subgraph obtained by the removal of any vertex has a perfect matching \cite{Lo1,LP}. It happens that both matching-covered and matching-critical graphs can be characterized in terms of ear decompositions. We are unable to find a unified approach to the bipartite and nonbipartite cases. The first step is to find a common combinatorial meaning for $\mu(G)$ and $\mu^*(G)$.
\end{Remark}

\medskip

\noindent {\bf Acknowledgement}.  
This paper was completed during a research stay of the second author at Vietnam Institute of Advanced Study in Mathematics. The authors are supported by Project NCXS02.01/22-23 of Vietnam Academy of Science and Technology. They are thankful to the referees for many suggestions which correct some proofs and improve the presentation of this paper.


\end{document}